\newtheorem{theorem}{Theorem}[section]
\newtheorem{corollary}[theorem]{Corollary}
\newtheorem{definition}[theorem]{Definition}
\newtheorem{example}[theorem]{Example}
\newtheorem{lemma}[theorem]{Lemma}
\newtheorem{proposition}[theorem]{Proposition}
\newtheorem{remark}[theorem]{Remark}
\def\CC{\mathbb C}
\def\ZZ{\mathbb Z}
\def\NN{\mathbb N}
\def\RR{\mathbb R}
\def\LL{\mathcal L}
\def\EE{\mathcal E}
\def\I{\mathcal{I}}
\newcommand{\carl}{\mathcal{C}}
\newcommand{\re}{\mathop{\rm Re}\nolimits}
\renewcommand{\Re}{\re}
\newcommand{\im}{\mathop{\rm Im}\nolimits}
\renewcommand{\Im}{\im}
\def\text{\mbox}
\title{Laplace--Carleson embeddings and infinity-norm admissibility}
\author{Birgit Jacob\thanks{Fakult\"at f\"ur  Mathematik und Naturwissenschaften, IMACM,
Bergische Universit\"at Wuppertal, Gau\ss stra\ss e 20, 42119 Wuppertal, Germany, \tt bjacob@uni-wuppertal.de}
\qquad Jonathan R.~Partington\thanks{School of Mathematics,
University of Leeds,
Leeds LS2 9JT, U.K. \tt
J.R.Partington@leeds.ac.uk}
\qquad Sandra Pott\thanks{Centre for Mathematics, Faculty of Science, Lund University, S\"olvegatan 18, 22100 Lund, Sweden, \tt Sandra.Pott@math.lu.se}
 \\ \\\qquad Eskil Rydhe\thanks{Centre for Mathematics, Faculty of Science, Lund University, S\"olvegatan 18, 22100 Lund, Sweden, {\tt Eskil.Rydhe@math.lu.se}.} \qquad Felix L.~Schwenninger\thanks{Department of Applied Mathematics, University of Twente, P.O. Box 217, 7500 AE Enschede, The Netherlands and Department of Mathematics, University of Hamburg, Bundesstra\ss e 55, 20146 Hamburg, Germany, 
              \tt{f.l.schwenninger@utwente.nl}}}
\date{}
\pgfplotsset{compat=1.16}
\begin{document}
\maketitle
\begin{abstract}
A full characterization of the boundedness of Laplace--Carleson embeddings on $L^\infty$ is provided, in terms of the Carleson intensity of the respective measure and of a suitable weighted Berezin transform of the measure. Moreover, boundedness results, and in some cases full characterizations of boundedness, are proved for a large class of  Orlicz spaces. These findings are crucial for characterizing admissibility of control operators for linear diagonal semigroup systems in a variety of contexts. A particular focus is laid on essentially bounded inputs.
\end{abstract}

{\bf Keywords.}  Laplace--Carleson embeddings, Admissibility, Semigroup dynamical systems.

{\bf 2000 Subject Classification.} 93D25, 93B28, 47D06, 46E15.

\section{Introduction}

This paper deals with so-called \textit{Laplace--Carleson embeddings}, which are maps of the form $\mathcal{L}\colon Z\to \mathrm{L}^{q}(\CC_+,d\mu)$ given by
\[\LL f(z):= \int_0^\infty e^{-zt} f(t) dt,\qquad z\in \CC_{+}.\]
Here $Z$ is a function space on $(0,\infty)$ and by $\CC_+$ we denote the open right half-plane of $\CC$. Thus Laplace--Carleson embedding are Carleson embeddings induced by the Laplace transform. In prior works \cite{jpp13,jpp12,Rydhe20}, Laplace--Carleson embeddings have been investigated for $Z=\mathrm{L}^{p}$, $1\le p<\infty$. A detailed review on known results is given in Section \ref{sec3}. In this article our focus lies on $Z=\mathrm{L}^\infty$ and $Z=\mathrm{L}^\Phi$, where $\mathrm{L}^\Phi$ denotes an Orlicz space. Beside sufficient and necessary conditions for the boundedness of the Laplace--Carleson embedding, in the case $q\ge 2$ we show that  the boundedness of $\mathcal{L}\colon \mathrm{L}^\infty\to \mathrm{L}^{q}(\CC_+,d\mu)$ implies the boundedness of $\mathcal{L}\colon \mathrm{L}^\Phi\to \mathrm{L}^{q}(\CC_+,d\mu)$ for some Orlicz space $\mathrm{L}^\Phi$, and we investigate the Laplace-Carleson embedding for $\mathrm{L}^\Phi$- and $\mathrm{L}^\infty$-functions supported on $(0,\tau_0)$ for some $\tau_0>0$.

The motivation for extending the results obtained in \cite{jpp13,jpp12,Rydhe20} is the applicability to admissible control operators for diagonal semigroups. The input-to-state map $u\mapsto x(t_{0})$ of the standard linear control system
\begin{equation}\label{eq:ABsystem}
	\dot x(t)=Ax(t)+Bu(t), \qquad x(0)=0, \quad t \ge 0,
\end{equation}
is given by 
\[\Theta(u)=\int_{0}^{t_{0}}T(t_{0}-s)Bu(s)\mathrm{d}s.\]
Here  $(T(t))_{t\ge 0}$ is a $C_0$-semigroup of bounded linear operators on a Banach space $X$ generated by $A$ and $B$ is a linear bounded operator from a Banach space $U$, the input space, to $X_{-1}$, an extrapolation space of $X$. Here the semigroup operators are identified with their unique extension to $X_{-1}$. The precise definition of this extrapolation space can be found in Section \ref{sec2}.

Therefore, maps of the type $\Theta$ are fundamental to understanding well-posedness and stability of linear  control systems. We remark, that if the semigroup $(T(t))_{t\ge 0}$ is diagonal with respect to some $q$-Riesz basis and $U=\CC$, then the mapping $\Theta$ is given by a Laplace-Carleson embedding. We recall that $B$ is a {\it $Z$-admissible \textcolor{black}{control} operator} (for $A$) if $\Theta:Z(0,t_{0};U)\to X_{-1}$ is well-defined, and bounded as a map from $Z(0,t_{0};U)$ to $X$. Again $Z(0,t_0;U)$ is a Banach space of $U$-valued functions on $(0,t_0)$. The case $Z=\mathrm{L}^{p}$, in particular $p=2$, is commonly studied in the literature, see \cite{jp} and the references therein. The  case $p=\infty$ is of great importance as it corresponds to bounded inputs, but it is also the most difficult to analyse, and our results here answer questions that have been implicitly open for several years. The Orlicz space case $Z=\mathrm{L}^\Phi$ for a certain class of Young functions $\Phi$\textcolor{black}{, referred to in this paper as $N$-functions, }was shown in \cite{JNPS} to play a key role in the analysis of input-to-state stability. As an application of the abstract Laplace-Carleson embedding results obtained in Section \ref{sec3} we are able to answer a question posed in \cite{JNPS} in the case of diagonal semigroups. We show that $\mathrm{L}^\infty$-admissibility implies $\mathrm{L}^\Phi$-admissibility for some  Orlicz space $\mathrm{L}^\Phi$, with $\Phi$ \textcolor{black}{ an $N$-function}\sout{ belonging to the class mentioned above}. Further, we obtain new results in this direction for not necessarily diagonal semigroup. We show that for left-invertible semigroups on Hilbert spaces $\mathrm{L}^\infty$-admissibility even implies $\mathrm{L}^2$-admissibility.

The organization of the paper is as follows: 
Section \ref{sec3} is devoted to Laplace--Carleson embeddings, while
in Section \ref{sec2} we present several results formulated in the language of admissible operators.

\subsection{Notation}
For the rest of the paper, $A$ will always denote the generator of a $C_{0}$-semigroup $(T(t))_{t\ge0}$. Further assumptions on the semigroup may be imposed in the respective sections. The spaces $X$ and $U$ will refer to general complex Banach spaces, unless specified otherwise. The space of bounded linear operators from $U$ to $X$ will be denoted by ${L}(U,X)$.

Following the the notation in \cite[Chapter~1.3]{Rao} and  \cite[Chapter~4.8]{BennetSharpley}, a Young function is a function $\Phi: [0, \infty)  \rightarrow \overline{\RR}_+$ of the form
\begin{equation}   \label{def:young}
   \Phi(t) = \int_0^t \phi(s) ds   \quad (t \ge 0),
\end{equation}
where $\phi: [0, \infty)  \rightarrow \overline{\RR}_+$ is a nondecreasing, left-continuous function which is neither constantly $0$ nor constantly $+\infty$. 

Given a Young function $\Phi$ with left derivative $\phi$ as in (\ref{def:young}), the complementary Young function $\Phi^c$ is given by
\begin{equation}   \label{def:compl}
   \Phi^c(t) = \int_0^t \phi^c(s) ds   \quad (t \ge 0),
\end{equation}
where $\psi^c: [0, \infty)  \rightarrow \overline{\RR}_+$ is the generalized inverse of $\phi$.
For an interval $I \subseteq \RR$, we can then define the Orlicz space $\mathrm{L}^{\Phi}(I;U)$ as the space of $U$-valued measurable functions $f:I\to U$
such that $\Phi(k^{-1}\|f(\cdot)\|_U)$ is integrable for some $k>0$. This space is a Banach function space, equipped with the norm
\begin{equation}\label{eq:Orlicznorm}
\|f\|_{\mathrm{L}^{\Phi}}=\inf\{k>0\colon \int_{I}\Phi(k^{-1}\|f(s)\|_U) \, \mathrm{d}s\leq1\}.
\end{equation}
For $U = \CC$, we write simply $\mathrm{L}^{\Phi}(I)$. Note that for any Young function $\Phi$,
\begin{equation}  \label{trivialincl}
 L^1(I) \cap L^\infty(I) \subseteq L^\Phi(I).
\end{equation}

It is a consequence of Young's inequality that a H\"older inequality for Orlicz spaces holds \cite[Chapter~1.3]{Rao},
\begin{equation}   \label{holderyoung}
    \| fg \|_{L^1(I)} \le 2 \| f \|_{L^\Phi(I)} \| g\|_{L^{\Phi^c}(I)}.
\end{equation}

We follow the notation of \cite{Rao} in calling an $\RR$-valued Young function $\Phi$ with the property that
\begin{equation}   \label{def:nfunction}
        \lim_{x\to0}\frac{\Phi(x)}{x}=\lim_{x\to\infty}\frac{x}{\Phi(x)}=0
\end{equation}
an $N$-function.  The complementary function of an N-function is again an N-function. This is the appropriate class for the applications of Orlicz spaces in the theory of infinite-dimensional linear systems introduced in \cite{JNPS}. Note however that this notation is not uniform in the literature,
  \cite{JNPS}  defines the class of Young functions as the class of $N$-functions in our notation.

 For more details on Orlicz spaces, we refer the reader to textbooks such as \cite{Rao}, \cite[Chapter~4.8]{BennetSharpley} or the appendix of \cite{JNPS}.

\section{Laplace--Carleson embeddings}
\label{sec3}
Let $\mu$ be a positive regular Borel measure on the complex right half-plane $\CC_+:=\{z=x+iy\mid y\in\mathbb{R},x>0\}$. We also use the shifted right half-planes $\CC_\alpha:=\{z=x+iy\mid y\in\mathbb{R},x>\alpha\}$. In this section, we only consider scalar-valued Orlicz spaces $Z$ on the interval $(0,\infty)$, that is $Z=Z(0,\infty;\CC)$ in our notation above. We will omit the reference to the interval here for the sake of brevity. Formally, what we mean by a \textit{Laplace--Carleson embedding} is a map of the form $\mathcal{L}\colon Z\to \mathrm{L}^{q}(\CC_+,d\mu)$ given by
\[\LL f(z):= \int_0^\infty e^{-zt} f(t) dt,\qquad z\in \CC_{+}.\]
Since convergence of a sequence in $Z$ implies pointwise convergence of the corresponding sequence of Laplace transforms,  any set inclusion of the form $\mathcal{L}Z\subseteq \mathrm{L}^{q}(\CC_+,d\mu)$ is automatically continuous by the closed graph theorem. 

The \emph{Hardy space} $\mathrm{H}^p(\CC_+)$ consists of all analytic functions $F\colon\CC_+\to\CC$ for which
\[
	\|F\|_{\mathrm{H}^p(\CC_+)}^p:=\sup_{\epsilon >0}\int_{y\in\RR}|F(\epsilon+iy)|^p\, dy<\infty.
\]
For the shifted half-plane $\CC_{\alpha}$, we have accordingly the Hardy space
$\mathrm{H}^p(\CC_{\alpha})$ of all analytic functions on $\CC_{\alpha}$ such that
$$
	\|F\|_{\mathrm{H}^p(\CC_{\alpha})}^p =\sup_{\epsilon >0}\int_{y\in\RR}|F(\epsilon+ \alpha + iy)|^p\, dy<\infty.
$$
For $F\in \mathrm{H}^p(\CC_+)$ and $F_\epsilon(iy):=F(\epsilon+iy)$, the limit $bF(iy)\:=\lim_{\epsilon\to 0^+}F_\epsilon(iy)$ exists for Lebesgue a.e. $y$. Moreover, $F_\epsilon\to bF$ in $\mathrm{L}^{p}(i\RR)$, provided that $p<\infty$. This makes $\mathrm{H}^p(\CC_{+})$ isometrically isomorphic to a closed subspace of $\mathrm{L}^{p}(i\RR)$. A good reference on Hardy spaces is \cite[Chapter~II]{garnett}.

For $\lambda\in\CC_{+}$ and $t>0$, let $k_\lambda(t)=\frac{1}{2\pi}\exp(-\bar\lambda t)$. Note that $\|k_\lambda\|_{\mathrm{L}^{p}}^p=\frac{1}{p(2\pi)^p\re\lambda}$. The so-called \emph{reproducing kernel} for $H^p(\CC_{+})$ is the analytic function
\[K_\lambda\colon z\mapsto \mathcal{L}k_\lambda(z)=\frac{1}{2\pi}\frac{1}{z+\bar\lambda},\]
defined at least for $\re z \ge 0$. If $1\le p<\infty$ and $F\in \mathrm{H}^p(\CC_+)$, then
\begin{equation}\label{eq:ReproducingFormula}
	F(\lambda)=\int_{y\in\RR}F(iy)\overline{K_\lambda(iy)}\, dy,
\end{equation}
which follows essentially from Cauchy's theorem.

\subsection{Laplace--Carleson embeddings and Carleson intensities}\label{sec21}

The {\it Carleson square} associated to an interval $I\subset i\mathbb{R}$ is the set
\[
Q_I:=\left\{ z=x+iy \in \CC_+\mid iy\in I,0<x\le |I| \right\},
\]
the right half of the Carleson square is
\begin{equation}   \label{eq:righthalf}
T_I:=\left\{ z=x+iy \in \CC_+\mid iy\in I,   |I|/2<x\le |I| \right\}.
\end{equation}
These are related to reproducing kernels by the fact that if $ \lambda$ is the centre of $Q_I$, so that in particular $\Re \lambda  = |I|/2$, then
\[
\frac{1}{\sqrt{10}\pi|I|}\le|K_\lambda(z)|\le \frac{1}{\pi|I|} \quad  \text{ for }  z \in Q_I.
\]
With $p'$ denoting the H\"older conjugate of $p\in[1,\infty]$, the above inequalities imply that if $\mathcal{L}\colon \mathrm{L}^{p}\to \mathrm{L}^{q}(\CC_+,d\mu)$ is bounded, then
\begin{equation}\label{eq:NecessaryIntensity}
	\mu(Q_I)\lesssim |I|^{q/p'} \quad\textnormal{for all intervals}\quad I\subset i\mathbb{R},
\end{equation}
see \cite[Proposition~3.1]{jpp13}. In the case $q > p'$, $Q_I$ can equivalently be replaced by $T_I$ in 
(\ref{eq:NecessaryIntensity}). It is a remarkable fact that in a variety of situations, condition \eqref{eq:NecessaryIntensity} is also sufficient for $\mathcal{L}\colon \mathrm{L}^{p}\to \mathrm{L}^{q}(\CC_+,d\mu)$ to be bounded. For $1\le p\le 2$, and $p'\le q <\infty$ (this corresponds to the region I in Figure \ref{figure:KnownAndNewResults}), $\mathcal{L}\colon \mathrm{L}^{p}\to \mathrm{L}^{q}(\CC_+,d\mu)$ if and only if \eqref{eq:NecessaryIntensity} holds, see \cite[Theorem~3.2]{jpp13}. 
In \cite[Theorem~1.1]{Rydhe20}, this result was extended to $2<p\le q<\infty$ (region II in Figure \ref{figure:KnownAndNewResults}). 
For $2\le q < p< \infty$ (region III in Figure \ref{figure:KnownAndNewResults}), \eqref{eq:NecessaryIntensity} is sufficient if $\mu$ has support on a vertical strip, but not if $\mu$ has support on a sector, see \cite[Theorem~3.6 and Theorem~3.5]{jpp13}. The thick line in Figure~\ref{figure:KnownAndNewResults} corresponds to the hypothesis of Theorem~\ref{thm:CharacterizationLpToLq} below. This new result characterizes the class of $\mu$ such that $\LL\colon \mathrm{L}^{\infty}\to \mathrm{L}^{q}(\CC_+,d\mu)$ for $q\ge 2$. 

\begin{figure}
	\begin{center}
		\begin{tikzpicture}[scale=0.7]
			\begin{axis}[xmin=0, xmax=1.1, ymin=0, ymax=1.1, 
				axis lines = left,
				x label style={at={(axis description cs:1,0)},anchor=west},
				y label style={at={(axis description cs:0,1)},rotate=270,anchor=east},
				xlabel={$\frac{1}{p}$},
				ylabel={$\frac{1}{q}$},
				scaled x ticks=false,
				scaled y ticks=false,
				xtick={0.5,1},
				xticklabels={$\frac{1}{2}$,$1$},
				ytick={0.5,1},
				yticklabels={$\frac{1}{2}$,$1$},
				width=0.618\textwidth,
				height=0.618\textwidth
				]
				\addplot[black] coordinates
				{(0,0) (0.5,0.5)};
				\addplot[black] coordinates
				{(0.5,0.5) (1,0)};
				\addplot[black] coordinates
				{(0,1) (1,1)};
				\addplot[black] coordinates
				{(1,1) (1,0)};
				\addplot[black] coordinates
				{(0.5,0.5) (.5,0)};
				\addplot[black] coordinates
				{(0,0.5) (.5,.5)};
				\addplot[black,style = {line width = 6pt}] coordinates
				{(0,0) (0,.5)};
				\node[black] at (axis cs:0.667,0.167){I};
				\node[black] at (axis cs:0.333,0.167){II};
				\node[black] at (axis cs:0.167,0.333){III};
			\end{axis}
		\end{tikzpicture}
	\end{center}
	\caption{Relation between condition \eqref{eq:NecessaryIntensity} and the boundedness of  $\mathcal{L}\colon \mathrm{L}^{p}\to \mathrm{L}^{q}(\CC_+,d\mu)$. If $1\le p\le 2$, $p'\le q <\infty$ (region I), or $2<p\le q<\infty$ (region II), then \eqref{eq:NecessaryIntensity} is necessary and sufficient for the embedding to be bounded. If $2\le q < p\le \infty$ (region III), then \eqref{eq:NecessaryIntensity} is necessary and sufficient under the additional assumption that $\mu$ has support on a vertical strip. For general measures, \eqref{eq:NecessaryIntensity} is necessary but not sufficient. The bold edge to the left corresponds to the hypothesis of Theorem~\ref{thm:CharacterizationLpToLq}.}\label{figure:KnownAndNewResults}
\end{figure}
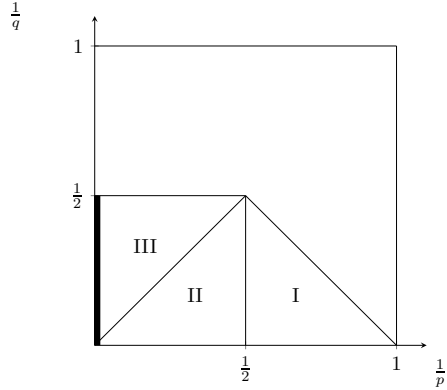

Motivated by the significance of \eqref{eq:NecessaryIntensity}, we introduce the concept \textit{$\alpha$-Carleson intensity}.

\begin{definition}
Let $\mu$ be a positive regular Borel measure on $\CC_+$ and $\alpha>0$. Then the  {\rm{$\alpha$-Carleson intensity}} $\mathcal{C}_\alpha[\mu]$ is given by
\[
\mathcal{C}_\alpha[\mu]=\sup_{\substack{I\subset i\mathbb{R}\\I\textnormal{ interval}}}\frac{\mu(Q_I)}{|I|^\alpha}.
\]
For $t >0$, the    {\rm{$\alpha$-Carleson intensity at scale $t$}}, $\mathcal{C}_\alpha[\mu](t)$, is given by
$$
\mathcal{C}_\alpha[\mu](t)=\sup_{\substack{I\subset i\mathbb{R}\\I\textnormal{ interval, } |I| =t}}\frac{\mu(Q_I)}{t^\alpha}.
$$
\end{definition}

Obviously, \eqref{eq:NecessaryIntensity} holds if and only if $\mathcal{C}_{q/p'}[\mu]<\infty$.

Measures supported on vertical strips will play an important role in the investigation below. The next definition is essentially a notational convention that will be used henceforth.
\begin{definition} Let $\mu$ be a positive regular Borel measure on $\CC_+$. For each $n\in\ZZ$, consider the \emph{dyadic strip}
	\[
	S_n:	=\left\{x+iy\mid y\in\mathbb{R},2^n\le x<2^{n+1}\right\},
	\]
	and define the measure $\mu_n$ on $\CC_{+}$ by $\mu_n\colon E\mapsto \mu(E\cap S_n)$.
\end{definition}

If $2\le q <p\le\infty$, and $\mu$ is supported on a vertical strip, then $\mathcal{L}\colon \mathrm{L}^{p}\to \mathrm{L}^{q}(\CC_+,d\mu)$ if and only if \eqref{eq:NecessaryIntensity} holds. The main technical difficulty in this paper is to characterize $\mathcal{L}\colon \mathrm{L}^{p}\to \mathrm{L}^{q}(\CC_+,d\mu)$ in terms of $\mu$, without imposing any further conditions on the support of $\mu$. In the following, we address this for the case $p=\infty$.

We require one further piece of notation. Recall that for $\alpha > -1$ and a positive regular Borel measure  $\mu$ on $\CC_+$, its Berezin transform $B_\alpha \mu$ is defined as
$$
   B_\alpha 
   \mu(z) = \int_{\CC^+}    \frac{ (\Re z)^{2+ \alpha}}{|w + \bar z|^{4 + 2 \alpha}}
   	 (\Re w)^\alpha d\mu(w)   \quad(z \in \CC_+).
$$
As an auxiliary expression, we write
$$
\widetilde B_\alpha 
\mu(z) = \int_{\CC^+}    \frac{ (\Re z)^{2+2 \alpha}}{|w + \bar z|^{4 + 2 \alpha}} d\mu(w)   \quad(z \in \CC_+).
$$

One verifies easily that for $ \alpha > -1$, $4 + 2 \alpha > \beta >0$,
\begin{equation}   \label{BerCarl}
	\sup_{t >0}    t^{2-\beta}    \sup_{\Re z =t}  \widetilde B_\alpha \mu(z) < \infty
\end{equation}
is an equivalent statement of the Carleson intensity condition $\carl_{\beta}[\mu] < \infty$.

Here is our main result of this section.

\begin{theorem}\label{thm:CharacterizationLpToLq}
	Let $\mu$ be a positive regular Borel measure on $\CC_+$, $\alpha > -1$, and $2 \le q$.
     
     Then the following are equivalent:
	\begin{enumerate}
	\item The Carleson-Laplace embedding 
	\begin{equation}  \label{Boundedness} 
	\mathcal{L}\colon \mathrm{L}^{\infty}(0,\infty)\to \mathrm{L}^{q}(\CC_+,d\mu) \text{ is bounded. }
	\end{equation}
	\item 
	\begin{equation}\label{eq:NecessarySummability}
		\sum_{n\in\ZZ}\mathcal{C}_{q}[\mu_n]<\infty.
	\end{equation} 
\item 
\begin{equation}\label{eq:Intensity}
\int_0^\infty    \frac{1}{t} \mathcal{C}_{q}[\mu](t) dt<\infty.
\end{equation} 
\end{enumerate}

If $\alpha > -1$ and $q < 4 + 2 \alpha$, then (1.) - (3.) is equivalent to
\begin{enumerate}
\setcounter{enumi}{3}
	\item    
\begin{equation}\label{eq:Berezin}
	\int_0^\infty    t^{1-q}  \sup_{\Re z =t } \widetilde B_\alpha \mu(z) dt < \infty.
\end{equation} 

\end{enumerate}

If $\alpha > -1$ and $q < 4 + \alpha$, then (1.) - (3.) is equivalent to
\begin{enumerate}
\setcounter{enumi}{4}
\item
\begin{equation}     \label{eq:realBerezin}
	\int_0^\infty    t^{1-q}  \sup_{\Re z =t } B_\alpha \mu(z) dt < \infty.
\end{equation}

\end{enumerate}

	Furthermore, the above sum respectively integrals  are comparable to $$\|\mathcal{L}\colon \mathrm{L}^{\infty}(0,\infty)\to \mathrm{L}^{q}(\CC_+,d\mu)\|^q,$$ with implied constants only depending on $q$ and $\alpha$.
	
\end{theorem}

We begin with the equivalence of (\ref{Boundedness}) and  \eqref{eq:NecessarySummability}. The implication of $(\ref{Boundedness}) \Rightarrow \eqref{eq:NecessarySummability} $ follows from the more general Theorem~\ref{thm:NecessaryLpToLq} below. The reverse implication follows from a different generalization, Theorem~\ref{thm:SufficiencyLPhitoLq}. Before proving these results, we need the following simple lemma. 

\begin{lemma}\label{lemma:GeometricPropertiesOfIntesities}
Let $\alpha\ge 1$.
	\begin{enumerate}[$(i)$]
		\item There exists an interval $I\subset i\mathbb{R}$ such that $|I|=2^{n+1}$ and
		\[
			\mathcal{C}_\alpha[\mu_n]\le 2^{\alpha + 1}\frac{\mu_n(Q_I)}{|I|^\alpha}.
		\]
		\item If $\beta\ge 1$, then
		\[
			\mathcal{C}_\alpha[\mu_n]\le 2^{\beta+n(\beta-\alpha)}\mathcal{C}_\beta[\mu_n]\le 2^{\alpha+\beta}\mathcal{C}_\alpha[\mu_n],
		\]
		i.e. $\mathcal{C}_\alpha[\mu_n]\approx 2^{n(\beta-\alpha)}\mathcal{C}_\beta[\mu_n]$, where the constants of comparison depend only on $\alpha$ and $\beta$.
		\item If one defines the shifted measure $\tilde\mu_n\colon E\mapsto \mu_n(E+2^{n-1})$, then
		\[
			\mathcal{C}_\alpha[\tilde\mu_n]\le 2^\alpha\mathcal{C}_\alpha[\mu_n].
		\]
	\end{enumerate}
\end{lemma}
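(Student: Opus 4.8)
The plan is to exploit throughout that $\mu_n$ is carried by the dyadic strip $S_n$, which has width $2^n$ and whose left edge sits at real part $2^n$. Consequently a Carleson square $Q_I$ can meet $S_n$ only when $|I|>2^n$, so that $\mu_n(Q_I)>0$ forces $|I|>2^n$. This single localisation to one scale drives all three parts. It is convenient to abbreviate, for $\gamma\ge 1$ and $J\subset i\mathbb{R}$, the one-scale quantity $M_n^{(\gamma)}:=\sup_{|J|=2^{n+1}}\mu_n(Q_J)/|J|^\gamma$.

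For $(i)$ I would split according to the size of $|I|$ relative to $2^{n+1}$. If $2^n<|I|\le 2^{n+1}$, enlarge $I$ to an interval $I'\supseteq I$ with $|I'|=2^{n+1}$; then $Q_I\subseteq Q_{I'}$, and since $|I|>2^n=|I'|/2$ one obtains $\mu_n(Q_I)/|I|^\alpha< 2^\alpha\,\mu_n(Q_{I'})/|I'|^\alpha\le 2^\alpha M_n^{(\alpha)}$. If instead $|I|>2^{n+1}$, cover $I$ by $m=\lceil |I|/2^{n+1}\rceil$ intervals $I_1,\dots,I_m$ of length $2^{n+1}$; since $Q_I\cap S_n\subseteq\bigcup_j Q_{I_j}$, subadditivity gives $\mu_n(Q_I)\le m\max_j\mu_n(Q_{I_j})$, and using $m\le 2|I|/2^{n+1}$ together with $\alpha\ge 1$ and $|I|\ge 2^{n+1}$ yields $\mu_n(Q_I)/|I|^\alpha\le 2\,M_n^{(\alpha)}\le 2^\alpha M_n^{(\alpha)}$. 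Taking the supremum over $I$ gives $\mathcal{C}_\alpha[\mu_n]\le 2^\alpha M_n^{(\alpha)}$; since the supremum defining $M_n^{(\alpha)}$ need not be attained, I would then pick a fixed interval $I$ with $|I|=2^{n+1}$ and $\mu_n(Q_I)/|I|^\alpha\ge M_n^{(\alpha)}/2$, which produces the stated bound with constant $2^{\alpha+1}$.

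For $(ii)$ I would first record the two-sided estimate $M_n^{(\gamma)}\le\mathcal{C}_\gamma[\mu_n]\le 2^\gamma M_n^{(\gamma)}$ for every $\gamma\ge 1$: the lower bound is trivial (fixed-length intervals are among all intervals), while the upper bound is exactly the case analysis of $(i)$. The decisive point is that at the single scale $2^{n+1}$ the exponent enters only through a power of $2^{n+1}$, namely $M_n^{(\alpha)}=2^{(n+1)(\beta-\alpha)}M_n^{(\beta)}$. Combining $\mathcal{C}_\alpha[\mu_n]\le 2^\alpha M_n^{(\alpha)}=2^\alpha 2^{(n+1)(\beta-\alpha)}M_n^{(\beta)}\le 2^{\beta+n(\beta-\alpha)}\mathcal{C}_\beta[\mu_n]$ gives the first inequality, and feeding $\mathcal{C}_\beta[\mu_n]\le 2^\beta M_n^{(\beta)}$ and $M_n^{(\beta)}=2^{(n+1)(\alpha-\beta)}M_n^{(\alpha)}\le 2^{(n+1)(\alpha-\beta)}\mathcal{C}_\alpha[\mu_n]$ back in and collecting the powers of $2$ yields the second inequality with constant $2^{\alpha+\beta}$. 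Both hypotheses $\alpha\ge 1$ and $\beta\ge 1$ are used so that the upper bound in the two-sided estimate is available for each exponent. For $(iii)$, observe that $\tilde\mu_n(Q_I)=\mu_n(Q_I+2^{n-1})$, where $Q_I+2^{n-1}$ has imaginary part in $I$ and real part in $(2^{n-1},|I|+2^{n-1})$; I would enclose it in the genuine Carleson square $Q_{I'}$ with $I\subseteq I'$ and $|I'|=|I|+2^{n-1}$, so that $\tilde\mu_n(Q_I)\le\mu_n(Q_{I'})$. Dividing by $|I|^\alpha$ gives $\tilde\mu_n(Q_I)/|I|^\alpha\le (|I'|/|I|)^\alpha\,\mathcal{C}_\alpha[\mu_n]$, and since a nonzero contribution forces $|I|>2^{n-1}$, hence $|I'|/|I|=1+2^{n-1}/|I|<2$, taking the supremum over $I$ delivers $\mathcal{C}_\alpha[\tilde\mu_n]\le 2^\alpha\mathcal{C}_\alpha[\mu_n]$.

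The main obstacle is the covering step in $(i)$: one must collapse the supremum over all intervals onto the supremum over intervals of the single length $2^{n+1}$, and the crux is precisely that $\mu_n$ lives on a strip of width $2^n$, so that both the ``too small'' intervals (via enlargement) and the ``too large'' intervals (via a subadditive cover) cost only a bounded factor, in fact at most $2^\alpha$. Once this one-scale reduction is established, parts $(ii)$ and $(iii)$ are routine scaling and translation estimates, the only delicate element being the exact bookkeeping of the powers of $2$.
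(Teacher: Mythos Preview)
Your proof is correct and follows essentially the same approach as the paper: both introduce the one-scale auxiliary quantity $M_n^{(\alpha)}=\sup_{|J|=2^{n+1}}\mu_n(Q_J)/|J|^\alpha$ (the paper calls it $\widetilde{\mathcal{C}_\alpha}[\mu_n]$), establish the two-sided comparison $M_n^{(\alpha)}\le\mathcal{C}_\alpha[\mu_n]\le 2^\alpha M_n^{(\alpha)}$ via a covering argument, use the exact scaling $M_n^{(\alpha)}=2^{(n+1)(\beta-\alpha)}M_n^{(\beta)}$ for part $(ii)$, and for $(iii)$ enclose the shifted square in a genuine Carleson square of at most double the side length. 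The only cosmetic differences are that the paper treats all $|I|\ge 2^n$ in a single covering step rather than splitting into two cases, and in $(iii)$ it uses the dilate $Q_{2I}$ where you use $Q_{I'}$ with $|I'|=|I|+2^{n-1}$.
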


\begin{proof} To prove $(i)$, introduce the auxiliary quantity
\[
C_{\alpha}[\mu_n](2^{-(n+1)}) =
	\widetilde{\mathcal{C}_\alpha}[\mu_n]=\sup_{|I|=2^{n+1}}\frac{\mu_n(Q_I)}{|I|^\alpha}.
\]
If $|I|\ge 2^n$, then there exists a finite collection of intervals $\{J_k\}_{k=1}^N$, where $N\le 2^{-n}|I|$, each $|J_k|=2^{n+1}$, and $I\subseteq\bigcup_{k=1}^NJ_k$. Since also $Q_I\cap S_n\subset \bigcup_{k=1}^NQ_{J_k}$,
\begin{eqnarray*}
		\mu_n(Q_I)&\le& \sum_{k=1}^N\mu_n(Q_{J_k})\le \sum_{k=1}^N\widetilde{\mathcal{C}_\alpha}[\mu_n]\left(2^{n+1}\right)^{\alpha}\le
	2^\alpha\sum_{k=1}^N\widetilde{\mathcal{C}_\alpha}[\mu_n]\left(\frac{|I|}{N}\right)^{\alpha}\\
	&\le& 2^\alpha|I|^\alpha\widetilde{\mathcal{C}_{\alpha}}[\mu_n].
\end{eqnarray*}
For smaller intervals, $\mu_n(Q_I)=0$. From this, $\mathcal{C}_\alpha[\mu_n]\le 2^\alpha \widetilde{\mathcal{C}_{\alpha}}[\mu_n]$, and since there clearly exists $I$ with $|I|=2^{n+1}$ such that $\widetilde{\mathcal{C}_{\alpha}}[\mu_n]\le 2\frac{\mu(Q_I)}{|I|^\alpha}$, $(i)$ follows.

For the proof of $(ii)$, it is immediate from the definition that $\widetilde{\mathcal{C}_{\alpha}}[\mu_n]=2^{(n+1)(\beta-\alpha)}\widetilde{\mathcal{C}_{\beta}}[\mu_n]$. Since $\widetilde{\mathcal{C}_{\beta}}[\mu_n]\le \mathcal{C}_{\beta}[\mu_n]$, and we just proved that $\mathcal{C}_\alpha[\mu_n]\le 2^\alpha \widetilde{\mathcal{C}_{\alpha}}[\mu_n]$, this establishes the first inequality in $(ii)$. The second inequality follows by interchanging $\alpha$ and $\beta$.

To prove $(iii)$, note that $\mu_n(Q_I+2^{n-1})=0$ when $|I|<2^{n-1}$, whereas if $|I|\ge 2^{n-1}$, then $Q_I+2^{n-1}\subseteq Q_{2I}$. 
\end{proof}

The necessity of \eqref{eq:NecessarySummability} for the boundedness of the Laplace--Carleson embedding
(\ref{Boundedness}) extends to $1\le q<\infty$:

\begin{theorem}\label{thm:NecessaryLpToLq}
If $1\le q<\infty$, then
	\begin{equation*}
		\sum_{n\in\ZZ}\mathcal{C}_{q}[\mu_n]\lesssim \|\mathcal{L}\colon \mathrm{L}^{\infty}\to \mathrm{L}^{q}(\CC_+,d\mu)\|^q.
	\end{equation*} 
\end{theorem}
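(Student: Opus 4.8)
The goal is to show that $\sum_{n\in\ZZ}\mathcal{C}_q[\mu_n]$ is bounded above by the $q$-th power of the embedding norm. The plan is to construct, for each $n$, a test function in $\mathrm{L}^\infty(0,\infty)$ whose Laplace transform is large on the relevant Carleson square inside the dyadic strip $S_n$, and then to combine these test functions across all scales $n$ into a single $\mathrm{L}^\infty$ function whose $\mathrm{L}^q(\mu)$-norm controls the whole sum.

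First I would reduce to a convenient normalization using Lemma \ref{lemma:GeometricPropertiesOfIntesities}$(i)$: for each $n$ there is an interval $I_n$ with $|I_n|=2^{n+1}$ such that $\mathcal{C}_q[\mu_n]\le 2^{q+1}\mu_n(Q_{I_n})/|I_n|^q$. So it suffices to lower-bound $|\mathcal{L}f(z)|$ on each such square $Q_{I_n}$ by a quantity of order $|I_n|$ (recall $\|k_\lambda\|$ and the reproducing-kernel estimate $|K_\lambda(z)|\gtrsim 1/(|I_n|)$ on $Q_{I_n}$, which corresponds to the Laplace transform of a localized exponential of $\mathrm{L}^\infty$-norm $\approx 1$). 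The natural single building block is the reproducing kernel: taking $f_n(t)=\exp(-\overline{\lambda_n}t)$ with $\lambda_n$ the reflected centre of $Q_{I_n}$ gives $\|f_n\|_{\mathrm{L}^\infty}=1$ and $|\mathcal{L}f_n(z)|\gtrsim |I_n|$ throughout $Q_{I_n}$, whence $\int_{Q_{I_n}}|\mathcal{L}f_n|^q\,d\mu\gtrsim |I_n|^q\mu_n(Q_{I_n})\gtrsim \mathcal{C}_q[\mu_n]\,|I_n|^{2q}$. The difficulty is that testing one square at a time only yields the bound for the supremum over $n$, not the sum.

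To capture the sum I would combine the building blocks with random (or carefully chosen deterministic) signs. The cleanest route is a Khinchine-type averaging argument: consider $f=\sum_n \eps_n c_n f_n$ for suitable normalizing constants $c_n$ and independent random signs $\eps_n$. Because the strips $S_n$ are at dyadically separated real parts, the different $f_n$ are well-separated in frequency/scale, and after averaging over the signs one expects the square-function (i.e. $\ell^2$-sum over $n$) to emerge in the $\mathrm{L}^q(\mu)$ norm via Khinchine's inequality applied fibrewise in $z$. One must verify that $\|f\|_{\mathrm{L}^\infty}$ stays uniformly bounded for every sign choice — this is where the dyadic separation of the strips and the exponential localization of each $k_{\lambda_n}$ in time should be exploited, showing that the sum of the exponentials has controlled sup-norm (up to a geometric-series bound). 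Pushing the averaged lower bound through then yields $\sum_n \mathcal{C}_q[\mu_n]\lesssim \|\mathcal{L}\|^q$.

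The main obstacle I anticipate is precisely the uniform $\mathrm{L}^\infty$ control of the combined test function $f=\sum_n \eps_n c_n f_n$: unlike in $\mathrm{L}^p$ for finite $p$, where disjoint-ish supports or orthogonality give norm additivity for free, the $\mathrm{L}^\infty$ norm of a sum of exponentials living on different scales requires a genuine pointwise-in-$t$ estimate. I would handle this by noting that at each time $t$ the dominant contributions come from the scales $n$ with $2^n\approx 1/t$, so that the effective number of overlapping terms is bounded and a geometric decay in $|n-\log_2(1/t)|$ controls the tail, yielding a uniform $\mathrm{L}^\infty$ bound independent of the signs. Once that uniform control is secured, the passage from the per-scale kernel estimate to the full summability bound via Khinchine is routine.
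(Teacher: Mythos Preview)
Your reduction via Lemma~\ref{lemma:GeometricPropertiesOfIntesities}$(i)$ and the general strategy of combining per-scale test functions into a single $\mathrm{L}^\infty$-function are both on the right track, and you have correctly identified the main obstacle. However, the specific test functions you propose, namely the full exponentials $f_n(t)=e^{-\bar\lambda_n t}$, do \emph{not} have the claimed two-sided localization in $t$, and this causes the argument to break down.

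Concretely, with $\Re\lambda_n\approx 2^n$ one has $|f_n(t)|=e^{-2^n t}$. For $n>\log_2(1/t)$ this indeed decays super-exponentially in $n-\log_2(1/t)$, but for $n<\log_2(1/t)$ one gets $|f_n(t)|=e^{-2^{-k}}$ with $k=\log_2(1/t)-n>0$, which tends to $1$ as $k\to\infty$ rather than decaying. Thus at every fixed $t>0$ there are infinitely many indices $n$ with $|f_n(t)|\ge 1/2$, so $\sum_n|c_nf_n(t)|=\infty$ unless $(c_n)\in\ell^1$, and no choice of signs $\eps_n$ can produce a function in $\mathrm{L}^\infty$. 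The asserted ``geometric decay in $|n-\log_2(1/t)|$'' is only one-sided, and the Khinchine route fails for the same reason: the variance $\sum_n |c_n|^2|f_n(t)|^2$ diverges.

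The paper's proof fixes exactly this point by replacing the full exponentials with their time-truncated versions
\[
f_n(t)=\chi_{(2^{-n-1},\,2^{-n}]}(t)\,e^{ic_n t},
\]
which have pairwise disjoint supports. Then $\|\sum_n f_n\|_{\mathrm{L}^\infty}=1$ trivially, for any choice of phases. The price is that $\mathcal{L}f_n$ is no longer an exact reproducing kernel, so one must verify by hand (Step~1 in the proof) the two-sided estimate $|F_m(z)|\lesssim 2^{-n-|n-m|}$ for $z\in T_n$; the crucial upper bound for $m<n$ comes from $\int_{2^{-m-1}}^\infty e^{-xt}\,dt\le x^{-1}e^{-2^{n-m-1}}$. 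With this in hand the paper uses a deterministic $N$-sparsification (summing over arithmetic progressions $mN+k$) rather than random signs, but once you have the disjoint-support test functions a Khinchine argument would also go through.
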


\begin{proof}
For $n\in\ZZ$, choose $I_n$ with $|I_n|=2^{n+1}$, and $\mathcal{C}_q[\mu_n]\le 2^{q+1}\frac{\mu_n(T_n)}{|I_n|^q}$, where $T_n$ denotes the right-hand half of the square $Q_{I_n}$. With $ic_n$ denoting the mid-point of $I_n$, define $f_n(t)=\chi_{(2^{-n-1},2^{-n}]}(t)e^{ic_nt}$, and $F_n=\LL f_n$. The proof now proceeds through three steps.

\paragraph{Step 1:}
We first show that there exist positive real constants $c$ and $C$ such that:
	\begin{enumerate}[$(i)$]
		\item If $n\in\ZZ$ and $z\in T_n$, then $|F_n(z)|\ge c 2^{-n}$.
		\item If $m,n\in\ZZ$ and $z\in T_n$, then $|F_m(z)|\le C 2^{-n-|n-m|}$.
	\end{enumerate}
\proof
$(i)$ If $z = x + i y \in T_n$ and $2^{-n-1}\le t\le 2^{-n}$, then $|t(y-c_n)|\le 1$. Hence, 
\[
|F_n(z)|\ge \Re F_n(z)=\int_{t=2^{-n-1}}^{2^{-n}}e^{-tx}\cos\left(t\left(y-c_n\right)\right)\, dt\ge e^{-2}\cos(1)2^{-n-1}.
\]

$(ii)$ By the triangle inequality,
\[
|F_m(z)|\le \int_{t=2^{-m-1}}^{2^{-m}}e^{-xt}\, d  t.
\]
Since the above integral is less than $2^{-m}=2^{-n-(m-n)}$, our inequality is immediate for $m\ge n$. For $m<n$, we use instead that
\begin{eqnarray*}
|F_m(z)|&\le& \int_{t=2^{-m-1}}^{\infty}e^{-xt}\, d  t \\
&=& \frac{e^{-2^{-m-1}x}}{x}\\
&\le& \frac{e^{-2^{n-m-1}}}{2^{n}}=2^{-n}2^{n-m}e^{-2^{n-m-1}}2^{m-n}.
\end{eqnarray*}
Since $2ae^{-a}$ is bounded for $a\ge 0$, the conclusion follows.
\qed

\paragraph{Step 2:} With $c$ and $C$ as in Step~1, choose an integer $N$ such that $C2^{3-N}\le c$. For $k\in\{1,2,\ldots,N\}$, define $g_k=\sum_{m\in\ZZ}f_{mN+k}$, and $G_k=\LL g_k$. We now show that if $n\in\ZZ$ and $z\in T_{nN+k}$, then $|G_k(z)|\ge \frac{1}{2}|F_{nN+k}(z)|$.
\proof
By the properties in Step~1,
\begin{multline*}
	|G_k(z)-F_{nN+k}(z)|\le \sum_{\substack{m\in\ZZ\\m\ne n}}|F_{mN+k}(z)|\le C2^{-nN-k}\sum_{\substack{m\in\ZZ\\m\ne n}}2^{-N|n-m|}\\
	=\frac{C2^{-nN-k+1-N}}{1-2^{-N}}
	\le
	C2^{-nN-k+2-N}
	\le \frac{c}{2}2^{-nN-k}\le\frac{1}{2}|F_{nN+k}(z)|.
\end{multline*}
The result now follows from the reverse triangle inequality.
\qed

\paragraph{Step 3:} We are now ready to complete the proof of Theorem~\ref{thm:NecessaryLpToLq}. With $N$ as above,
\begin{equation*}
	\sum_{n\in\ZZ}\mathcal{C}_q[\mu_n]
	\le
	2^{q+1}\sum_{n\in\ZZ}\frac{\mu_n(T_n)}{2^{(n+1)q}}
	= 
	2^{q+1}\sum_{k=1}^N\sum_{n\in\ZZ}2^{-(nN+k+1)q}\mu_n(T_{nN+k}).	
\end{equation*}
According to the previous steps,
\[
2^{-(nN+k+1)q}\lesssim |F_{nN+k}(z)|^q\lesssim |G_{k}(z)|^q,
\]
whenever $z\in T_{nN+k}$. Hence,
\begin{equation*}
	2^{-(nN+k+1)q}\mu_n(T_{nN+k})\lesssim \int_{T_{nN+k}}|G_k|^q\, d \mu ,
\end{equation*}
and
\begin{equation*}
	\sum_{n\in\ZZ}\mathcal{C}_q[\mu_n]
	\lesssim 
	\sum_{k=1}^N\sum_{n\in\ZZ}\int_{T_{nN+k}}|G_k|^q\, d \mu
	\le 
	\sum_{k=1}^N\int_{\CC_+}|G_k|^q\, d \mu.
\end{equation*}
Since $\|g_k\|_{\mathrm{L}^{\infty}}=1$, $\sum_{n\in\ZZ}\mathcal{C}_q[\mu_n]\lesssim \|\LL\colon \mathrm{L}^{\infty}\to \mathrm{L}^{q}(\CC_+,d\mu)\|^q$, with implied constants only depending on $q$.
\end{proof}

The sufficiency of \eqref{eq:NecessarySummability} for the boundedness of the Laplace-Carleson Embedding (\ref{Boundedness}) can be extended to the situation where $\mathrm{L}^{p}$ is replaced by certain Orlicz spaces $\mathrm{L}^{\Phi}$.
For Orlicz spaces, we have the following variant of sufficiency of \eqref{eq:NecessarySummability}, generalizing the sufficiency part of Theorem \ref{thm:CharacterizationLpToLq}.

\begin{theorem}\label{thm:SufficiencyLPhitoLq}
	
	Let  $ 2 \le q < \infty$. Let $\Phi$ be a Young function of the form $\Phi(t)=\tilde\Phi(t^{q'})$, where $\tilde \Phi$ is another Young function. Then it holds that
	\begin{equation}\label{eq:SufficientLPhiToLq}
		\|\mathcal{L}\colon \mathrm{L}^{\Phi}(0,\infty)\to \mathrm{L}^{q}(\CC_{+},d\mu)\|^q\lesssim
		\sum_n \left(2^{n}\|\exp^{-q'2^{n-1}}\|_{\mathrm{L}^{\tilde \Phi^c}}\right)^{q-1}\mathcal{C}_{q}[\mu_n].
	\end{equation}
\end{theorem}

\begin{remark}   \label{stabilityYoung}
	It is clear that if $\tilde\Phi$ is a Young function, then so is $\Phi\colon t\mapsto \tilde\Phi(t^{q'})$. The corresponding statement is true if $\tilde \Phi$ is an $N$-function. The converse is not true. The present construction ensures that $\Phi$ increases ``not too slowly'' relative to $t\mapsto t^{q'}$.
\end{remark}

\begin{proof}[Proof of Theorem \ref{thm:SufficiencyLPhitoLq}]
To prove \eqref{eq:SufficientLPhiToLq}, we need two main tools. The first is the classical Hausdorff--Young theorem: Given $1\le p \le 2$, the Fourier transform is a bounded map from $\mathrm{L}^{p}(\mathbb{R})$ to $\mathrm{L}^{p'}(\mathbb{R})$. This readily implies boundedness of $\LL\colon \mathrm{L}^{p}(0,\infty)\to \mathrm{H}^{p'}(\CC_{+})$. The second tool is the Carleson embedding theorem, e.g. \cite[Theorem~II.3.9]{garnett}, which states that $\|\mathrm{H}^q(\CC_{+})\hookrightarrow \mathrm{L}^{q}(\CC_+,d\mu)\|^q$ is comparable to $\mathcal{C}_1[\mu]$.

Let $f\colon(0,\infty)\to\CC$ be such that $F=\LL f$ is well-defined as an analytic function on $\CC_+$. The following calculations will yield that this is always the case when $f\in \mathrm{L}^{\Phi}$. 

It holds that
\begin{eqnarray*}
	\int_{\CC_+}|F|^q\, d\mu 
	&=&
	\sum_{n\in\ZZ}\int_{\CC_+}|F|^q\, d\mu_n\\
	&=&
	\sum_{n\in\ZZ}\int_{\CC_+}|F(z+2^{n-1})|^q\, d\tilde \mu_n(z)\\
	&=&
	\sum_{n\in\ZZ}\int_{\CC_+}|\LL(f\exp^{-2^{n-1}})|^q\, d\tilde \mu_n,
\end{eqnarray*}
where $\tilde\mu_n$ is the shifted measure $E\mapsto \mu_n(E+2^{n-1})$ appearing in Lemma~\ref{lemma:GeometricPropertiesOfIntesities}. In combination with Carleson's theorem and the Hausdorff--Young theorem, we obtain
\begin{eqnarray*}
	\int_{\CC_+}|\LL(f\exp^{-2^{n-1}})|^q\, d\tilde \mu_n
	&\lesssim&
	\mathcal{C}_1[\tilde\mu_n]\|\LL(f\exp^{-2^{n-1}})\|_{\mathrm{H}^q}^q
	\\
	&\lesssim&
	\mathcal{C}_1[\tilde\mu_n]\,\|f\exp^{-2^{n-1}}\|_{\mathrm{L}^{q'}}^q\\
	&=&
	\mathcal{C}_1[\tilde\mu_n]\,\||f|^{q'}\exp^{-q'2^{n-1}}\|_{\mathrm{L}^{1}}^{q/q'}.	
\end{eqnarray*}
Appealing to Lemma~\ref{lemma:GeometricPropertiesOfIntesities}, $\mathcal{C}_1[\tilde\mu_n]\lesssim 2^{n(q-1)}\mathcal{C}_q[\mu_n]$. We now apply H\"older's inequality for Orlicz spaces
 (\ref{holderyoung}) to control
\begin{eqnarray*}
	\left\||f|^{q'}\exp^{-q'2^{n-1}}\right\|_{\mathrm{L}^{1}} \le 2
	\left\||f|^{q'}\right\|_{\mathrm{L}^{\tilde \Phi}}\left\|\exp^{-q'2^{n-1}}\right\|_{\mathrm{L}^{\tilde\Phi^c}}.
\end{eqnarray*}
By (\ref{trivialincl}), $\|\exp^{-q'2^{n-1}}\|_{\mathrm{L}^{\tilde\Phi^c}}<\infty$ for any Young function $\tilde\Phi^c$. This shows in particular that $f\exp^{-2^{n-1}}\in \mathrm{L}^{q'}$, so $F(z)=\LL f(z)$ is well-defined for $\re z > 2^{n-1}$. As $n$ is arbitrary, $F\colon \CC_{+}\to \CC$ is well-defined and analytic. It also holds that $\||f|^{q'}\|_{\mathrm{L}^{\tilde \Phi}}^{1/q'}=\|f\|_{\mathrm{L}^{\Phi}}$.  Piecing all of this together, we obtain \eqref{eq:SufficientLPhiToLq}.
\end{proof}

\begin{proof}[Proof of Theorem \ref{thm:CharacterizationLpToLq}]
To prove of the equivalence of  (\ref{Boundedness}) and (\ref{eq:NecessarySummability}), we note that 
 by Theorem~\ref{thm:NecessaryLpToLq},  the boundedness of $\mathcal{L}\colon \mathrm{L}^{\infty}\to \mathrm{L}^{q}(\CC_+,d\mu)$ implies \eqref{eq:NecessarySummability}. To see the reverse implication, we apply Theorem~\ref{thm:SufficiencyLPhitoLq} to the case where $\mathrm{L}^{\Phi}=\mathrm{L}^{\infty}$, in which $\mathrm{L}^{\tilde\Phi^c}=\mathrm{L}^{1}$. 

It remains to show the equivalence of (\ref{eq:NecessarySummability}),   (\ref{eq:Intensity}), (\ref{eq:Berezin}), and (\ref{eq:realBerezin}). 
  The inequalities
   \begin{equation*}
       \carl_q[\mu_n] \le 2^q \carl_q[\mu](t) 
         \lesssim \tilde B_\alpha \mu(z) \quad      \text{ for } 2^{n+1} \le t \le 2^{n+2}, \Re z =t
   \end{equation*}
   show the implication (\ref{eq:Berezin}) $\Rightarrow$ (\ref{eq:Intensity}) $\Rightarrow$
(\ref{eq:NecessarySummability}). 
For the  implication (\ref{eq:NecessarySummability})
$\Rightarrow$ (\ref{eq:Berezin}), choose 
for a given $z \in \CC_+$ an integer $n \in \ZZ$ such that $2^{n} \le \re z < 2^{n+1}$. Let $I$ be the interval in $i \RR$ with  center $\Im z$ and $|I| = 2^{n+1}$, let $Q_I$ be the Carleson square over $I$,  and let $T_I$ be its right half.
For $l \in \ZZ$, let $I_l$ denote the translated interval $I + l  |I| $. We note
$$
 \bigcup_{ l \in \ZZ} Q_{I_{l} } = \bigcup_{ k \le n} S_k
$$
and the standard decay estimate
\begin{equation} \label{eq:decay2}
\frac{ (\Re z)^{2+ 2\alpha} }{|w + \bar z|^{4+ 2 \alpha}}  \approx
\frac{1  }{2^{2n}       (  1 + |l|^2)^{2 +  \alpha}   } 
\quad (w \in Q_{I_l}).
\end{equation}

For $k > n$, let $I^{(k-n)} =2^{k-n} I $ and $I_{ k, l} = I^{(k-n)} + i l 2^{k+1}$ . Here, for $c >0$, $c I$ denotes the interval in $i \RR$ with the same center as $I$ and length $c |I|$. We have
$$
   S_k \subseteq \bigcup_{ l \in \ZZ} Q_{I_{k,l} }
$$
and the decay estimate
\begin{equation} \label{eq:decay3}
\frac{ (\Re z)^{2+ 2\alpha} }{|w + \bar z|^{4+ 2 \alpha}}  \approx
\frac{2^{(n-k)(2 + 2 \alpha)}}{2^{2k} (  1 + |l|^2)^{2 +  \alpha}   } 
\quad (w \in T_{I_{k,l}}).
\end{equation}

Altogether, we find that

\begin{multline*}
	\widetilde{B_\alpha} \mu(z) = \int_{\CC_+}  \frac{ (\Re z)^{2+ 2 \alpha}}{|w + \bar z|^{4+ 2 \alpha}} 
	d\mu(w) \\
	\lesssim 2^{qn-2n}  \carl_q[\mu](2^{n+1})    + 
	\sum_{k > n}  2^{(2+ 2 \alpha)n-(4+ 2 \alpha)k + kq}  \carl_q[\mu_k](2^{k+1}) \\
	\le  2^{-2n} \left( \sum_{k \le n}  2^{n-k}    2^{kq}   \carl_q[\mu_k](2^{k+1})    +  \sum_{k > n} 2^{(4+ 2 \alpha)n-(4+ 2 \alpha)k + kq }    \carl_q[\mu_k](2^{k+1})  \right).
\end{multline*}
Hence
\begin{multline*}
	\int_0^\infty    t^{1-q}  \sup_{\Re z =t } \widetilde{B_\alpha} \mu(z) dt \\
	\lesssim \sum_{n= - \infty}^\infty 2^{n(2-q)} 2^{-2n}    \left(   \sum_{k \le n}2^{n-k}2^{kq}   \carl_q[\mu_k](2^{k+1})    +  \sum_{k > n} 2^{(4+ 2 \alpha)n-(4+ 2 \alpha)k + kq}  \carl_q[\mu_k](2^{k+1})  \right) \\
	= \sum_{n= - \infty}^\infty      \left(   \sum_{k \le n}2^{n-k}2^{(k-n)q}   \carl_q[\mu_k](2^{k+1})    +  \sum_{k > n} 2^{(4 + 2 \alpha) (n-k)+ (k-n)q }  \carl_q[\mu_k](2^{k+1})  \right) \\
	= \sum^\infty_{k= - \infty}        \carl_q[\mu_k](2^{k+1})  \left(   \sum_{n \ge k}             2^{(k-n)(q-1)}       +  \sum_{n < k}    2^{(k-n)(q-4 - 2 \alpha) } \right)   \\
	\lesssim  \sum^\infty_{k= - \infty}        \carl_q[\mu_k],
\end{multline*}
where the implied constants depend only on $4 + 2 \alpha - q$.

Finally, we have to  show the equivalence of (\ref{eq:NecessarySummability}) and
(\ref{eq:realBerezin}). 
Assume that (\ref{eq:NecessarySummability}) holds and that $4 + \alpha > q$.
Note that since $q \ge 2$ and $ \alpha > -1$, we have $q > \alpha -1$.

Again, for a given $z \in \CC_+$ and $l \in \ZZ$, choose $n \in \ZZ$,
$I$ and $I_l$ as  above.

Using the standard estimate
\begin{equation} \label{eq:decay1}
	\frac{ (\Re z)^{2+ \alpha} \Re w^\alpha}{|w + \bar z|^{4+ 2 \alpha}}  \approx
	\frac{2^{(2+ 2 \alpha)n}  }{2^{2n}       (  1 + l^2)^{2 +  \alpha}   } 
	\quad (w \in T_{I_l})
\end{equation}  

 we note  for $k \in \ZZ $, $k \le n$,
 \begin{multline*}
	\int_{S_k}   \frac{ 1}{|w + \bar z|^{4+ 2 \alpha}} 
	d\mu(w)  =   \sum_{l \in \ZZ}    	\int_{S_k \cap Q_{I_l} }   \frac{ 1}{|w + \bar z|^{4+ 2 \alpha}} 
	d\mu(w)   \\
	 \lesssim   	2^{-n (4 + 2 \alpha)}  \sum_{l \in \ZZ}    
(1 + l^2)^{-(4 + 2 \alpha)}  \mu_k(Q_{I_l}) \\
	\le
	2^{-n (4 + 2 \alpha)}  \sum_{l \in \ZZ}    
	(1 + l^2)^{-(2 +  \alpha)} 2^{n-k}     2^{kq}  \mathcal{C}_q [\mu_k] \\
	\lesssim
	2^{-n (3 + 2 \alpha)}      2^{k(q-1)}  \mathcal{C}_q [\mu_k],
	\end{multline*}
	with implied constants only depending on $\alpha$.
	
For $k > n$, let $I^{(k-n)} =2^{k-n} I $ and $I_{ k, l} = I^{(k-n)} + i l 2^{k+1}$  as above. With
$$
   S_k \subseteq \bigcup_{ l \in \ZZ} Q_{I_{k,l} }
$$
and
$$
   | w + \bar z |^2 \approx 2^{2k}  (1 + l^2)   \text{ for } w \in Q_{I_{k,l} },
$$
we obtain
\begin{multline*}
		\int_{S_k}   \frac{ 1}{|w + \bar z|^{4+ 2 \alpha}} 
	d\mu(w)  =   \sum_{l \in \ZZ}    	\int_{S_k \cap Q_{I_{k,l} } }   \frac{ 1}{|w + \bar z|^{4+ 2 \alpha}} 
	d\mu(w)   \\
	\lesssim   	2^{- k(4 + 2 \alpha)}  \sum_{l \in \ZZ}    
	(1 + l^2)^{-(2 +  \alpha)}  \mu_k(Q_{I_l}) \\
	\le
	2^{-k (4 + 2 \alpha)}  \sum_{l \in \ZZ}    
	(1 + l^2)^{-(2 +  \alpha)}     2^{kq}  \mathcal{C}_q [\mu_k] \\
	\lesssim
	2^{-k (4 + 2 \alpha - q)}   \mathcal{C}_q [\mu_k] , \\
\end{multline*}
again with implied constants depending on $\alpha$.
Altogether,
\begin{multline*}
		\int_0^\infty t^{1-q} \sup_{\re z =t} B_\alpha \mu(z) dt \\
	\lesssim  \sum_{n=- \infty} ^\infty  2^{n(2-q)}
	\sup_{ \Re z = 2^n}  2^{n(2+ \alpha)} \sum_{k \in \ZZ}  2^{k \alpha} 	\int_{S_k}   \frac{ 1}{|w + \bar z|^{4+ 2 \alpha}} 
	d\mu(w)
		\\
	\lesssim \sum_{n=- \infty} ^\infty  2^{n(2-q)}  2^{n(2+ \alpha)}  \quad \quad \quad  \quad \qquad  \quad \quad \quad  \quad \qquad  \quad \quad \quad  \quad \\
 \quad \quad \quad  \quad \qquad 
	\left( \sum_{k \le n}  2^{k\alpha}   2^{-n (3 + 2 \alpha)}      2^{k(q-1)}  \mathcal{C}_q [\mu_k] 
	        +   \sum_{k >n}      2^{k\alpha}    
	        	2^{-k (4 + 2 \alpha - q)}     \mathcal{C}_q [\mu_k]     \right) \\	 
	\lesssim \sum_{n=- \infty} ^\infty  2^{n(4 + \alpha -q)}  
	 \left( \sum_{k \le n}  2^{k(q+\alpha -1)}   2^{-n (3 + 2 \alpha)}    \mathcal{C}_q [\mu_k] 
	 +   \sum_{k >n}      	2^{-k (4 + \alpha - q)}    
	  \mathcal{C}_q [\mu_k]     \right) \\   
	 =
	 \sum_{k = - \infty}^\infty  \mathcal{C}_q [\mu_k]   
	 \left( 2^{k(q+\alpha -1)}  \sum_{n \ge k} 
	  2^{n(1 - \alpha -q)}    + 
	    2^{-k(4+\alpha -q)} \sum_{n <k} 
	  2^{n(4 + \alpha -q)} \right)  \\
	  \lesssim  \sum_{k = - \infty}^\infty  \mathcal{C}_q [\mu_k]  < \infty, \\
  \end{multline*}
	again, with implied constants depending on $\alpha$ and  $q$. Hence
	(\ref{eq:realBerezin}) holds. 
	
	Conversely, again
	 with $|I| = 2^{n+1}$, $z \in T_I$, we have
	 $$
	    B_\alpha \mu(z) \gtrsim 2^{n(2 +2 \alpha)}
	     \int_{T_I}  \frac{ 1}{|w + \bar z|^{4+ 2 \alpha}}  d \mu(w) \gtrsim 2^{-2n} \mu(T_I)
	 $$
	 and consequently
	 \begin{multline*}
	 	\int_0^\infty t^{1-q} \sup_{\re z =t} B_\alpha \mu(z) dt 
	 	\gtrsim \sum_{n = - \infty}^\infty 2^{n(2 - q)} 2^{-2n} 2^{nq}  \mathcal{C}_q[\mu_n] 
	 	=  \sum_{n = - \infty}^\infty   \mathcal{C}_q[\mu_n].
	 	\end{multline*}
	 	\color{black}
This finishes the proof of Theorem \ref{thm:CharacterizationLpToLq}.
\end{proof}

In general, applying Theorem~\ref{thm:SufficiencyLPhitoLq} with $\Phi(t)=t^{p}$, and computing the norms $\|\exp^{-q'2^{n-1}}\|_{\mathrm{L}^{\tilde \Phi^c}}=\|\exp^{-q'2^{n-1}}\|_{\mathrm{L}^{(p/q')'}}$, we obtain the following result, which we state for the sake of being explicit.
\begin{proposition}\label{thm:SufficiencyLpToLq}
	Let $q\ge 2$ and $p\ge q'$. With $\mu_n$ as in Theorem~\ref{thm:CharacterizationLpToLq}, it then holds that
	\begin{equation}\label{eq:SufficientLpToLq}
		\|\mathcal{L}\colon \mathrm{L}^{p}(0,\infty)\to \mathrm{L}^{q}(\CC_+,d\mu)\|^q\lesssim
		\sum_n 2^{nq/p}\mathcal{C}_{q}[\mu_n].
	\end{equation}
\end{proposition}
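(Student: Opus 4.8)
The plan is to derive Proposition~\ref{thm:SufficiencyLpToLq} as a direct corollary of Theorem~\ref{thm:SufficiencyLPhitoLq}, so the entire argument amounts to specializing the general Orlicz estimate to the power function $\Phi(t)=t^p$ and evaluating the resulting norm factor explicitly. The first task is to verify that $\Phi(t)=t^p$ fits the structural hypothesis of Theorem~\ref{thm:SufficiencyLPhitoLq}, namely that it factors as $\Phi(t)=\tilde\Phi(t^{q'})$ for some Young function $\tilde\Phi$. Since $t^p=(t^{q'})^{p/q'}$, the natural choice is $\tilde\Phi(s)=s^{p/q'}$, and this is a genuine Young function precisely because the assumption $p\ge q'$ forces the exponent $p/q'\ge 1$. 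This is exactly the reason the hypothesis $p\ge q'$ appears in the statement, and it is the only place where it is used.

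Once the factorization is in hand, I would substitute into the right-hand side of \eqref{eq:SufficientLPhiToLq}. The only quantity that needs genuine computation is the Orlicz norm $\|\exp^{-q'2^{n-1}}\|_{\mathrm{L}^{\tilde\Phi^c}}$. Here $\tilde\Phi(s)=s^{p/q'}$, so $\mathrm{L}^{\tilde\Phi}$ is (isomorphic to) $\mathrm{L}^{p/q'}$, and its complementary Young function $\tilde\Phi^c$ corresponds to the Hölder-conjugate exponent, giving $\mathrm{L}^{\tilde\Phi^c}=\mathrm{L}^{(p/q')'}$. Thus the factor to evaluate is the ordinary $\mathrm{L}^{(p/q')'}$-norm of the decaying exponential $t\mapsto e^{-q'2^{n-1}t}$ on $(0,\infty)$. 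A direct integration gives, up to a constant depending only on the exponent,
\[
\|\exp^{-q'2^{n-1}}\|_{\mathrm{L}^{(p/q')'}}\approx \left(q'2^{n-1}\right)^{-1/(p/q')'},
\]
since $\|e^{-at}\|_{\mathrm{L}^r}=(ra)^{-1/r}$ for $a>0$. It then remains to raise this to the power $q-1$, multiply by $(2^n)^{q-1}$ as dictated by \eqref{eq:SufficientLPhiToLq}, and track the powers of $2^n$: collecting the exponent of $2^n$ should yield exactly $q/p$ after simplification using $q'=q/(q-1)$ and the relation between $(p/q')'$ and the conjugate exponents. Absorbing all constants depending only on $p$ and $q$ into the implied constant then produces \eqref{eq:SufficientLpToLq}.

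I do not anticipate a serious obstacle here, as the proposition is explicitly stated ``for the sake of being explicit'' and is genuinely a bookkeeping specialization of the preceding theorem. The one place demanding care is the exponent arithmetic: correctly identifying $\tilde\Phi^c$ and its associated Lebesgue exponent $(p/q')'$, then verifying that the accumulated power of $2^n$ collapses to $q/p$. Concretely, with $r=(p/q')'$ one has $1/r=1-q'/p$, so the exponent of $2^n$ coming from the norm factor is $-(q-1)/r=-(q-1)(1-q'/p)$, and adding the explicit prefactor exponent $n(q-1)$ from $(2^n)^{q-1}$ gives a net power $(q-1)\cdot(q'/p)=(q-1)\cdot q/((q-1)p)=q/p$, as required. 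I would present the computation of the exponential norm and this final exponent reconciliation as the substantive steps, and simply cite Theorem~\ref{thm:SufficiencyLPhitoLq} for everything else.
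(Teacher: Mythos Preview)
Your proposal is correct and follows exactly the route the paper takes: the proposition is stated immediately after the sentence ``applying Theorem~\ref{thm:SufficiencyLPhitoLq} with $\Phi(t)=t^{p}$, and computing the norms $\|\exp^{-q'2^{n-1}}\|_{\mathrm{L}^{\tilde \Phi^c}}=\|\exp^{-q'2^{n-1}}\|_{\mathrm{L}^{(p/q')'}}$,'' and your plan is precisely that specialization, with the exponent bookkeeping $(q-1)q'/p=q/p$ carried out correctly.
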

For $p<\infty$, condition \eqref{eq:SufficientLpToLq} is not necessary for $\mathcal{L}\colon \mathrm{L}^{p}(0,\infty)\to \mathrm{L}^{q}(\CC_+,d\mu)$ to be bounded, as can be seen from \cite[Theorem~3.5]{jpp13}. Next we will show that if $\mu$ has support on a vertical strip, then \eqref{eq:SufficientLpToLq} reduces to \eqref{eq:NecessaryIntensity}. This is the content of Theorem \ref{thm:pqexp1}, which is basically a reformulation of \cite[Thm.~3.6]{jpp13}, but allowing specifically for the case $p=\infty$.

\begin{theorem}\label{thm:pqexp1}
Let $\mu$ be a positive regular Borel measure supported in a strip
 $\CC_{\alpha_1, \alpha_2} = \{ z \in \CC: \alpha_1 \le \re z \le  \alpha_2 \}$ for some $\alpha_2 \ge \alpha_1  >0$, and let $1 \le p' \le q < \infty$ and $q \ge 2$.  
 Then the following assertions are equivalent:
\begin{enumerate}[(i)]
\item The embedding
$    \LL: \mathrm{L}^{p}(0,\infty) \rightarrow \mathrm{L}^{q}(\CC_+, \mu)$
is well-defined and bounded.
\item There exists a constant $C>0$ such that 
\begin{equation}  
 \label{eq:duren16}
\mu(Q_I) \le C
  |I|^{q/p'}  \text{ for all intervals }   I \subset i\RR.
  \end{equation}
  \end{enumerate}
In this case, the bound in $(i)$ depends only on $C$ and $\alpha_2/\alpha_1$.
\end{theorem}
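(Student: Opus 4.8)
For the proof of Theorem~\ref{thm:pqexp1} the plan is to treat the two implications separately, noting that only the sufficiency direction uses the strip hypothesis and is responsible for the dependence of the bound on $\alpha_2/\alpha_1$.

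For the necessity $(i)\Rightarrow(ii)$ I would simply invoke \eqref{eq:NecessaryIntensity}, which is obtained by testing the embedding against the normalised reproducing kernels $k_\lambda$: if $\bar\lambda$ is the centre of $Q_I$ (so $\Re\lambda=|I|/2$), then $|K_\lambda(z)|\ge(\sqrt{10}\,\pi|I|)^{-1}$ on $Q_I$ while $\|k_\lambda\|_{\mathrm{L}^p}\approx(\Re\lambda)^{-1/p}=(|I|/2)^{-1/p}$, and combining these yields $\mu(Q_I)^{1/q}\lesssim\|\LL\|\,|I|^{1/p'}$, i.e. \eqref{eq:duren16} with $C\lesssim\|\LL\|^q$. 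The only point needing a word is the endpoint $p=\infty$: there $\|k_\lambda\|_{\mathrm{L}^\infty}=1/(2\pi)$ is independent of $\lambda$ and $q/p'=q$, so the same computation gives $\mu(Q_I)\lesssim\|\LL\|^q|I|^q$. This half uses no information about the support of $\mu$.

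For the sufficiency $(ii)\Rightarrow(i)$ the starting point is Proposition~\ref{thm:SufficiencyLpToLq} (applicable since $q\ge2$ and $p\ge q'$, the latter being equivalent to $p'\le q$), which gives
\[
\|\LL\colon\mathrm{L}^p\to\mathrm{L}^q(\CC_+,d\mu)\|^q\lesssim\sum_n 2^{nq/p}\,\mathcal{C}_q[\mu_n];
\]
for $p=\infty$ one uses Theorem~\ref{thm:CharacterizationLpToLq} instead, whose right-hand side is $\sum_n\mathcal{C}_q[\mu_n]$. The crucial step is to convert each $\mathcal{C}_q[\mu_n]$ into the quantity that hypothesis $(ii)$ actually controls, namely $\mathcal{C}_{q/p'}[\mu]\le C$. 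Applying Lemma~\ref{lemma:GeometricPropertiesOfIntesities}$(ii)$ with $\alpha=q$ and $\beta=q/p'$ (both $\ge1$ by hypothesis) and using the identity $q/p'-q=-q/p$, one obtains $\mathcal{C}_q[\mu_n]\approx 2^{-nq/p}\mathcal{C}_{q/p'}[\mu_n]$, so that the dyadic weights cancel exactly:
\[
2^{nq/p}\,\mathcal{C}_q[\mu_n]\approx\mathcal{C}_{q/p'}[\mu_n]\le\mathcal{C}_{q/p'}[\mu]\le C,
\]
the last inequality being condition $(ii)$ together with $\mu_n\le\mu$.

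Now the strip hypothesis enters. Since $\mu$ is supported in $\{\alpha_1\le\Re z\le\alpha_2\}$, the dyadic strip $S_n$ meets the support of $\mu$ only when $2^{n+1}>\alpha_1$ and $2^n\le\alpha_2$, that is for at most $\log_2(\alpha_2/\alpha_1)+2$ values of $n$, while $\mu_n=0$ for all other $n$. Summing the uniformly bounded terms over this finite range yields
\[
\sum_n 2^{nq/p}\,\mathcal{C}_q[\mu_n]\lesssim C\bigl(\log_2(\alpha_2/\alpha_1)+1\bigr),
\]
which depends only on $C$ and $\alpha_2/\alpha_1$, as claimed. The main obstacle, and the reason the strip cannot be dropped, is precisely this last step: the rescaling in Lemma~\ref{lemma:GeometricPropertiesOfIntesities}$(ii)$ shows each summand is $\lesssim C$, but the full series $\sum_{n\in\ZZ}2^{nq/p}\mathcal{C}_q[\mu_n]$ need not converge for a general measure (compare \cite[Theorem~3.5]{jpp13}); the finite support in $\Re z$ is exactly what makes the sum finite and simultaneously pins the quantitative constant to $\alpha_2/\alpha_1$.
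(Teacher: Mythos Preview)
Your proof is correct and follows essentially the same route as the paper's own argument: necessity via \eqref{eq:NecessaryIntensity}, and sufficiency by combining Proposition~\ref{thm:SufficiencyLpToLq} with Lemma~\ref{lemma:GeometricPropertiesOfIntesities}$(ii)$ to rewrite $2^{nq/p}\mathcal{C}_q[\mu_n]\approx\mathcal{C}_{q/p'}[\mu_n]\le\mathcal{C}_{q/p'}[\mu]$, then using the strip hypothesis to reduce the sum to finitely many terms whose number depends only on $\alpha_2/\alpha_1$. Your separate invocation of Theorem~\ref{thm:CharacterizationLpToLq} for the endpoint $p=\infty$ is harmless but unnecessary, since Proposition~\ref{thm:SufficiencyLpToLq} already covers that case (with $2^{nq/p}=1$).
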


\proof
Condition $(ii)$ is a reformulation of $\mathcal{C}_{q/p'}[\mu]<\infty$. The implication $(i)\implies (ii)$ was proved already in relation to \eqref{eq:NecessaryIntensity}. To obtain the reverse implication, assume instead that $\mathcal{C}_{q/p'}[\mu]<\infty$. Since $\mu$ is supported on a vertical strip, $\mu=\sum_{n=M}^N\mu_n$ for some integers $M,N$, with $N-M$ only depending on $\alpha_2/\alpha_1$. Hence,
\[
\sum_n 2^{nq/p}\mathcal{C}_{q}[\mu_n]=\sum_{n=M}^N 2^{nq/p}\mathcal{C}_{q}[\mu_n].
\]
By Lemma~\ref{lemma:GeometricPropertiesOfIntesities}, $2^{nq/p}\mathcal{C}_{q}[\mu_n]\approx \mathcal{C}_{q/p'}[\mu_n]$. Moreover, it is clear that $\mathcal{C}_{q/p'}[\mu_n]\le \mathcal{C}_{q/p'}[\mu]$. Thus, the above sum is finite, and $(i)$ follows from Proposition~\ref{thm:SufficiencyLpToLq}. \qed

\subsection{Laplace--Carleson embeddings on Orlicz spaces}

In addition to Theorem~\ref{thm:CharacterizationLpToLq}, we derive the following consequence of Theorem~\ref{thm:NecessaryLpToLq} and Theorem~\ref{thm:SufficiencyLPhitoLq}.
\begin{theorem}\label{thm:InfinityImpliesPhi}
	Assume that $q\ge 2$, and that $\mathcal{L}\colon \mathrm{L}^{\infty}(0,\infty)\to \mathrm{L}^{q}(\CC_{+},d\mu)$ is bounded. Then there exists an N-function $\Phi\colon[0,\infty)\to[0,\infty)$ for which $\mathcal{L}\colon \mathrm{L}^{\Phi}(0,\infty)\to \mathrm{L}^{q}(\CC_+,d\mu)$ is bounded.
\end{theorem}

We need some further lemmata to prove this result.

\begin{lemma}\label{lemma:ExponentialOrliczIntegral}
	Let $\Phi\colon[0,\infty)\to[0,\infty)$ be a Young function with left\--conti\-nuous derivative $\phi$. For $\alpha,C>0$ it then holds that
	\[
	\int_0^\infty \Phi\left(\frac{e^{-\alpha t}}{C}\right)\, dt=\frac{1}{\alpha C}\int_0^1\phi\left(\frac{s}{C}\right)\log\left(\frac{1}{s}\right)\, ds.
	\]
\end{lemma}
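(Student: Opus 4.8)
The plan is to reduce the identity to a single integration by parts after an exponential change of variables. First I would substitute $s=e^{-\alpha t}$, so that $ds=-\alpha s\,dt$ and hence $dt=-\frac{1}{\alpha}\frac{ds}{s}$. As $t$ runs over $(0,\infty)$, the variable $s$ runs over $(0,1)$, and the integral becomes
\[
\int_0^\infty \Phi\!\left(\frac{e^{-\alpha t}}{C}\right)dt=\frac{1}{\alpha}\int_0^1 \Phi\!\left(\frac{s}{C}\right)\frac{ds}{s}.
\]
This already isolates the role of $\alpha$, which now appears only as the prefactor $1/\alpha$, matching the right-hand side.

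Next I would use that a Young function satisfies $\Phi(x)=\int_0^x\phi(u)\,du$ with $\Phi(0)=0$, so that $\frac{d}{ds}\Phi(s/C)=\frac{1}{C}\phi(s/C)$ (valid a.e., since $\phi$ is the left-continuous derivative). Integrating by parts with $u=\Phi(s/C)$ and $dv=\frac{ds}{s}$, so that $v=\log s=-\log(1/s)$, yields
\[
\int_0^1 \Phi\!\left(\frac{s}{C}\right)\frac{ds}{s}
=\Big[\Phi\!\left(\tfrac{s}{C}\right)\log s\Big]_0^1
+\frac{1}{C}\int_0^1 \phi\!\left(\frac{s}{C}\right)\log\!\left(\frac{1}{s}\right)ds.
\]
Combining this with the change of variables gives precisely the claimed formula, provided the boundary term vanishes.

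The only genuine point requiring care is the vanishing of the boundary term at the two endpoints, which I expect to be the main (though mild) obstacle. At $s=1$ the factor $\log s=0$, so that end is trivial. At $s=0$ one must show $\Phi(s/C)\log s\to 0$; here I would invoke the defining property $\lim_{x\to0}\Phi(x)/x=0$ of a Young function, which forces $\Phi(s/C)=o(s)$ as $s\to 0^+$, whence $\Phi(s/C)\log s=o(s\log s)\to 0$. The same estimate, together with the monotonicity of $\phi$, also guarantees integrability of $\phi(s/C)\log(1/s)$ near $s=0$, so that all manipulations are justified; the remaining steps are purely computational.
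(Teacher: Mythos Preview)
Your proof is correct. The route differs from the paper's: rather than first substituting and then integrating by parts, the paper writes $\Phi\bigl(e^{-\alpha t}/C\bigr)=\int_0^{e^{-\alpha t}/C}\phi(s)\,ds$, applies Fubini's theorem to the resulting double integral, and finishes with the substitution $Cs=s'$. The Fubini argument has the minor advantage that no boundary terms arise, so one does not need to invoke $\Phi(x)/x\to 0$ to justify the vanishing at $s=0$; your approach, on the other hand, makes the structure of the identity transparent as a single change of variables followed by one integration by parts, and your handling of the boundary term is clean and correct. Both arguments are elementary and of comparable length.
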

\begin{proof} Changing the order of integration,
\begin{eqnarray*}
	\int_0^\infty \Phi\left(\frac{e^{-\alpha t}}{C}\right)\, dt
	&=&
	\int_{t=0}^\infty \int_{s=0}^{\frac{e^{-\alpha t}}{C}}\phi(s)\, ds dt	\\
	&=&
	\int_{s=0}^{1/C} \phi(s)\int_{t=0}^{\frac{1}{\alpha}\log\left(\frac{1}{Cs}\right)}\, dtds\\
	&=&
	\frac{1}{\alpha}\int_{s=0}^{1/C} \phi(s)\log\left(\frac{1}{Cs}\right)\, ds.
\end{eqnarray*}
All that remains is the change of variables $Cs=s'$.
\end{proof}

\begin{lemma}\label{lemma:YoungFunctionExists}
	Let $q'\ge1$ and $(\gamma_n)_{n\in\ZZ}$ be a positive sequence such that $\gamma_n\ge 1$ for all $n\in \ZZ$, and $\gamma_n\to\infty$ as $|n|\to\infty$. Then there exists a N-function $\tilde \Phi^c$ such that 
	\[
	2^{n}\|\exp ^{-q'2^{n-1}}\|_{\mathrm{L}^{\tilde \Phi^c}}^{}\le \gamma_n\qquad (n\in\ZZ).
	\]
\end{lemma}

\begin{proof} Let $\phi^c\colon[0,\infty)\to[0,\infty)$ be a continuous, strictly increasing function with $\phi^c(0)=0$ and
\[
\phi^c(2^n)\le \frac{q'}{2}\frac{\gamma_n}{\int_0^1\log\left(\frac{1}{s}\right)\, ds}
\]
for all $n$. Such a function exists, since $\gamma_n\to\infty$ as $|n|\to\infty$. Define the Young function $\tilde \Phi^c\colon t\mapsto \int_0^t\phi^c(s)\, ds$, this is in fact an N-function as defined in (\ref{def:nfunction}). 
Using that each $\gamma_n\ge 1$, together with monotonicity, we find that
\[
\int_{s=0}^1\phi^c\left(\frac{2^{n}s}{\gamma_n}\right)\log\left(\frac{1}{s}\right)\, ds\le \phi^c\left(2^{n}\right)\int_{s=0}^1\log\left(\frac{1}{s}\right)\, ds \le\frac{q'}{2}\gamma_n.
\]
By Lemma~\ref{lemma:ExponentialOrliczIntegral}, the above left-hand side is equal to
\[
\frac{q'}{2}\gamma_n\int_0^\infty \tilde \Phi^c\left(\frac{2^{n}e^{-q'2^{n-1}t}}{\gamma_n}\right)\, dt,
\]
i.e.\ $2^{n}\|\exp ^{-q'2^{n-1}}\|_{\mathrm{L}^{\tilde \Phi^c}}^{}\le \gamma_n$ by the definition of the Orlicz norm.
\end{proof}

\begin{proof}[Proof of Theorem~\ref{thm:InfinityImpliesPhi}]
Since $\mathcal{L}\colon \mathrm{L}^{\infty}(0,\infty)\to \mathrm{L}^{q}(\CC_{+},d\mu)$ is bounded, it holds that $\sum_n\mathcal{C}_q[\mu_n]<\infty$ by Theorem~\ref{thm:NecessaryLpToLq}. Then there exists a positive sequence $(\gamma_n)_n$ such that $\gamma_n\to\infty$ and $\sum_n\gamma_n^{q-1}\mathcal{C}_q[\mu_n]<\infty$. We can assume without loss of generality that $\gamma_n\ge 1$ for every $n$. Let $\tilde\Phi^c$ be an $N$-function as in Lemma~\ref{lemma:YoungFunctionExists}, i.e.
\[
2^{n}\|\exp ^{-q'2^{n-1}}\|_{\mathrm{L}^{\tilde\Phi^c}}^{}\le \gamma_n.
\]
If $\Phi(t)=\tilde\Phi(t^{q'})$, then $\Phi$ is an $N$-function by Remark \ref{stabilityYoung}, and Theorem~\ref{thm:SufficiencyLPhitoLq} implies that $\mathcal{L}\colon \mathrm{L}^{\Phi}(0,\infty)\to \mathrm{L}^{q}(\CC_+,d\mu)$ is bounded.
\end{proof}

\subsection{Laplace--Carleson embeddings from $\mathrm{L}^{\Phi}(0,\tau_0)$}

In this section we develop finite time analogues of the preceding results on Laplace--Carleson embeddings. More precisely, we consider Laplace transforms of functions supported on $(0,\tau_0)$ for some $\tau_0>0$. We begin with the case of $\mathrm{L}^{\infty}(0,\tau_0)$, and then progress to $\mathrm{L}^{\Phi}(0,\tau_0)$ for more general Young functions $\Phi$. We will find that the value of $\tau_0$ is immaterial.

\begin{theorem}     \label{thm:FiniteT} 
Let $q\ge 2$, and $\mu$ be a positive regular Borel measure supported on $\CC_+$. Suppose that $\tau_0\in[2^M,2^{M+1}]$ for some integer $M$, and let $\mu^M$ denote the restriction of $\mu$ to the strip $\{0\le \re z\le 2^{-M}\}$. Then $\LL:\mathrm{L}^{\infty}(0,\tau_0) \to \mathrm{L}^{q}(\CC_+,\mu)$ is bounded if and only if
\begin{equation}    \label{eq:finiteC}
\sum_{n=-M}^\infty \mathcal{C}_q[\mu_n] +   \mathcal{C}_q[\mu^M] < \infty
\end{equation}
with an associated equivalence of norms, where the equivalence constant depends only on $q$. Moreover, if $\LL:\mathrm{L}^{\infty}(0,\tau_0) \to \mathrm{L}^{q}(\CC_+,\mu)$ is bounded, then $\LL:\mathrm{L}^{\infty}(0,\tau) \to \mathrm{L}^{q}(\CC_+,\mu)$ is bounded whenever $\tau>0$.
\end{theorem}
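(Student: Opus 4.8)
The plan is to prove Theorem~\ref{thm:FiniteT} as a finite-time analogue of Theorem~\ref{thm:CharacterizationLpToLq}, splitting the measure $\mu$ into a part on the dyadic strips $S_n$ with $n \ge -M$, which ``sees'' only the behaviour of $\LL f$ far from the imaginary axis, and the remaining part $\mu^M$ supported on the thin strip $\{0 \le \re z \le 2^{-M}\}$ close to the imaginary axis. The key structural observation is that for $f$ supported on $(0,\tau_0)$ with $\tau_0 \approx 2^M$, the scale $2^{-M} \approx 1/\tau_0$ is precisely the threshold separating the two regimes: on strips $S_n$ with $n \ge -M$ (i.e.\ $\re z \gtrsim 1/\tau_0$) the exponential factor $e^{-zt}$ does not decay appreciably over the support, so the finite-time problem behaves like the infinite-time one, whereas on the thin strip near $i\RR$ the compact support of $f$ forces boundedness essentially ``for free''.

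For the \textbf{necessity} direction, I would show that boundedness of $\LL \colon \mathrm{L}^\infty(0,\tau_0) \to \mathrm{L}^q(\CC_+,\mu)$ forces both $\sum_{n=-M}^\infty \mathcal{C}_q[\mu_n] < \infty$ and $\mathcal{C}_q[\mu^M] < \infty$. For the sum over strips, I would re-run the three-step argument of Theorem~\ref{thm:NecessaryLpToLq} using the same test functions $f_n = \chi_{(2^{-n-1},2^{-n}]} e^{ic_n t}$, observing that for $n \ge -M$ these are supported inside $(0,\tau_0)$ (since $2^{-n} \le 2^M \le \tau_0$), so they are admissible inputs; the lower bound $|F_n(z)| \gtrsim 2^{-n}$ on $T_n$ and the off-diagonal decay estimate from Step~1 go through unchanged, yielding $\sum_{n \ge -M}\mathcal{C}_q[\mu_n] \lesssim \|\LL\|^q$. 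For $\mathcal{C}_q[\mu^M]$, I would test against the single function $\chi_{(0,\tau_0)}$ (or a modulated version), whose Laplace transform is bounded below on the relevant Carleson boxes of size $\lesssim 2^{-M}$ lying in the thin strip, extracting the intensity bound there.

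For the \textbf{sufficiency} direction, I would treat the two pieces separately and add. On $\sum_{n \ge -M}\mu_n$, the natural route is to invoke Theorem~\ref{thm:SufficiencyLPhitoLq} (or directly Theorem~\ref{thm:CharacterizationLpToLq}) with $\mathrm{L}^\Phi = \mathrm{L}^\infty$: since $\mathrm{L}^\infty(0,\tau_0) \hookrightarrow \mathrm{L}^\infty(0,\infty)$ via extension by zero is isometric onto its range, any $f \in \mathrm{L}^\infty(0,\tau_0)$ extends to an $\mathrm{L}^\infty(0,\infty)$ function with the same norm, and the summability $\sum_{n\ge -M}\mathcal{C}_q[\mu_n] < \infty$ controls the integral of $|F|^q$ against $\sum_{n \ge -M}\mu_n$. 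On the thin strip, I would use that $\mu^M$ is supported in a vertical strip $\{0 \le \re z \le 2^{-M}\}$ of bounded width; here the finite support of $f$ means $F = \LL f$ extends analytically across $i\RR$ and, crucially, $\|f \exp^{-2^{-M-1}}\|_{\mathrm{L}^{q'}} \lesssim \|f\|_{\mathrm{L}^\infty}\,\tau_0^{1/q'} \approx \|f\|_{\mathrm{L}^\infty} 2^{M/q'}$ is finite, so one can apply the Hausdorff--Young plus Carleson embedding machinery of Theorem~\ref{thm:SufficiencyLPhitoLq} to this single strip, with the factor $\mathcal{C}_q[\mu^M]$ emerging after rescaling the strip to unit width via Lemma~\ref{lemma:GeometricPropertiesOfIntesities}. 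The final clause, that boundedness is independent of $\tau$, then follows because condition~\eqref{eq:finiteC} for one admissible $M$ is equivalent to the corresponding condition for any other $M'$: changing $\tau$ only shifts finitely many strips between the ``sum'' part and the ``thin strip'' part, and Lemma~\ref{lemma:GeometricPropertiesOfIntesities}$(ii)$ shows each such finite transfer preserves finiteness of the total.

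The main obstacle I anticipate is the careful bookkeeping at the interface $\re z \approx 2^{-M}$, namely verifying that the contribution of $\mu^M$ near the imaginary axis is genuinely controlled by the single intensity $\mathcal{C}_q[\mu^M]$ rather than by an infinite sum of sub-strip intensities. Making this rigorous requires exploiting that $F = \LL f$ is \emph{entire} of exponential type (by Paley--Wiener, since $f$ has compact support) and in particular bounded on the closure of the thin strip, so that no extra summation over dyadic sub-strips of $\{0 \le \re z \le 2^{-M}\}$ is needed; I expect the cleanest formulation is to absorb the whole thin strip into a single application of the strip case of Theorem~\ref{thm:pqexp1}/Theorem~\ref{thm:SufficiencyLPhitoLq} after translating by $2^{-M-1}$ to move its support off the boundary.
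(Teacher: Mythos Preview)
Your proposal is correct and follows essentially the same route as the paper: the same decomposition $\mu = \sum_{n\ge -M}\mu_n + \mu^M$, the same re-use of the test functions from Theorem~\ref{thm:NecessaryLpToLq} (restricted to $n\ge -M$ so that their supports lie in $(0,\tau_0)$) together with a modulated indicator for the $\mu^M$ term, and for sufficiency the same extension-by-zero into $\mathrm{L}^\infty(0,\infty)$ for the strip sum plus a shift of the thin strip followed by Hausdorff--Young and the Carleson embedding (the paper routes this through $\mathrm{H}^2$ on the enlarged half-plane $\CC_{-2^{-M},+}$, you through $\mathrm{H}^q$ via an $\mathrm{L}^{q'}$ bound, which is an immaterial variation). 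Your identification of the interface at $\re z\approx 2^{-M}$ as the only delicate point, and its resolution by translating $\mu^M$ into a single dyadic strip where Theorem~\ref{thm:pqexp1} applies, matches the paper exactly.
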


\proof We start by noting that it is sufficient to consider $\tau_0=2^M$. Indeed, if $\LL:\mathrm{L}^{\infty}(0,\tau_0) \to \mathrm{L}^{q}(\CC_+,\mu)$ is bounded, then $\LL:\mathrm{L}^{\infty}(0,2^M) \to \mathrm{L}^{q}(\CC_+,\mu)$ is bounded. The core of the proof is to prove that this is equivalent to \eqref{eq:finiteC}. It is easy to see that if we replace $M$ by $M+1$ in \eqref{eq:finiteC}, then we obtain an equivalent condition. This in turn implies boundedness of $\LL:\mathrm{L}^{\infty}(0,2^{M+1}) \to \mathrm{L}^{q}(\CC_+,\mu)$, and hence of $\LL:\mathrm{L}^{\infty}(0,\tau_0) \to \mathrm{L}^{q}(\CC_+,\mu)$. This argument immediately implies that boundedness of $\LL:\mathrm{L}^{\infty}(0,\tau_0) \to \mathrm{L}^{q}(\CC_+,\mu)$ yields boundedness of $\LL:\mathrm{L}^{\infty}(0,\tau) \to \mathrm{L}^{q}(\CC_+,\mu)$ for all $\tau>0$.

The proof that \eqref{eq:finiteC} is necessary is largely analogous to the proof of Theorem~\ref{thm:NecessaryLpToLq}.	

We fix $M \in \ZZ$. For $n \ge -M$, we define $f_n, F_n, N$ as in the proof of Theorem~\ref{thm:NecessaryLpToLq}. For $k=0, \dots, N-1$, define
$$
     g_k = \sum_{m \in \ZZ, m N +k \ge -M}f_{m N +k}     \quad \text{ and } G_k = \LL g_k.
$$
Note that $g_k \in \mathrm{L}^{\infty}(0, 2^M)$ for $k=0, \dots, N-1$.
As in the proof of Theorem~\ref{thm:NecessaryLpToLq}, we obtain:

If $n\in\ZZ$, $nN+k \ge {-M}$, and $z\in T_{nN+k}$, then $|G_k(z)|\ge \frac{1}{2}|F_{nN+k}(z)|\ge c2^{-nN-k-1}$. This implies, 
\begin{equation*}
		\sum_{n \ge -M}\mathcal{C}_q[\mu_n]
		\lesssim 
		\sum_{k=1}^N\sum_{\substack{n\in\ZZ;\\ nN+k \ge -M}}\int_{T_{nN+k}}|G_k|^q\, d \mu
		\le 
		\sum_{k=1}^N\int_{\CC_+}|G_k|^q\, d \mu.
	\end{equation*}
	Assuming that $\LL\colon \mathrm{L}^{\infty}(0,2^M)\to \mathrm{L}^{q}(\CC_+,\mu)$ is bounded, the above left-hand side is finite. 
	
	We still have to check boundedness of 
	the second term in Condition \eqref{eq:finiteC}. For any interval $I$ with $|I| = 2^{-M-1}$, let $c$ be the center and define $f=  \chi_{[0, 2^M]}(t) e^{ict}$, $F = \LL f$. Then
	$$
	   F(s) = \int_{0}^{2^M}   e^{-(s - ic)t} dt = \int_{0}^{2^M}   e^{-(\Re s) t}     e^{i(c-\Im s )t} dt.
	$$
	Note that $t \Re s \le \frac{1}{2}$ and $|c - \Im s| t \le \frac{1}{4}$ for $t \in [0, 2^M]$, $s \in Q_I$, thus
	$$
	    |F(s)| \gtrsim 2^{M+1}   \text{ for } s \in Q_I,
	$$
	and, using boundedness of $\LL:\mathrm{L}^{\infty}(0,2^M) \to \mathrm{L}^{q}(\CC_+,\mu)$,
\begin{equation*}
	      1 \gtrsim     \int_{\CC_+}|F(z)|^q\, d \mu(z)       \ge  \int_{Q_I}|F(s)|^q\, d \mu(s)  \\ \gtrsim     \frac{\mu(Q_I)}{|I|^q}.
\end{equation*}

We now turn to sufficiency of  (\ref{eq:finiteC}). Boundedness of the embedding for the measure $\sum_{n=-M}^\infty \mu_n$ follows directly from Theorem \ref{thm:CharacterizationLpToLq}.
To finish the proof,
it is sufficient to show that if $ \mu $ is supported on $[0,2^{-M}) \times \RR$ and $\mu(Q_I) \lesssim|I|^q$ for all $|I|=2^{-M+1}$, then
$\LL: \mathrm{L}^{\infty}(0,2^M) \to \mathrm{L}^{q}(\CC_+,\mu)$ is bounded.

Note that $\LL: \mathrm{L}^{\infty}(0,2^M) \to \mathrm{H}^2(\CC_{-2^{-M},+})$ is bounded with norm proportional to  $2^{M/2}$,
since the function $t \mapsto e^{-t \Re s} f(t)$ lies in $\mathrm{L}^2(0,2^M)$ when $f \in \mathrm{L}^{\infty}(0,2^M)$, with the corresponding norm estimate.
Here $\mathrm{H}^2(\CC_{-2^{-M},+})$ is the Hardy space on the larger half-plane $\{s: \Re s > -2^{-M} \}$.

We observe that for the norm of the embedding $\EE$, we have
$$
      \| \EE \|_{\mathrm{H}^2(\CC_{-2^{-M},+}) \to \mathrm{L}^{q}(\mu)} =  \| \EE \|_{\mathrm{H}^2(\CC_{+}) \to \mathrm{L}^{q}(\tilde \mu_{2^{-(M+1)}})},
$$
where
$$
   \tilde \mu_{2^{-(M+1)}}(E) = \mu(E- 2^{-(M+1)}).
$$
Now $\tilde \mu_{2^{-(M+1)}}$ is supported on the strip $S_{-M-1}$, and we may directly apply Theorem~\ref{thm:pqexp1} in order to obtain that
 $\| \EE \|_{\mathrm{H}^2(\CC_{-2^{-M},+}) \to \mathrm{L}^{q}(\mu)} \lesssim 2^{-M/2}$. This finishes the proof.
\qed

\begin{theorem}\label{thm:InfinityImpliesPhiFiniteTime}
	Assume $q\ge 2$, $\tau_0>0$, and that $\mathcal{L}\colon \mathrm{L}^{\infty}(0,\tau_0)\to \mathrm{L}^{q}(\CC_{+},d\mu)$ is bounded. Then there exists an N-function $\Phi\colon[0,\infty)\to[0,\infty)$ for which $\mathcal{L}\colon \mathrm{L}^{\Phi}(0,\tau_0)\to \mathrm{L}^{q}(\CC_+,d\mu)$ is bounded.
\end{theorem}
\proof

By Theorem~\ref{thm:FiniteT}, we may assume that $\tau_0=2^M$ for some integer $M$. Let $\mu=\mu'+\mu^M$, where $\mu'=\sum_{n=-M}^\infty\mu_n$. Assuming boundedness of $\mathcal{L}\colon \mathrm{L}^{\infty}(0,2^M)\to \mathrm{L}^{q}(\CC_{+},d\mu)$, condition \eqref{eq:finiteC} together with Theorem~\ref{thm:CharacterizationLpToLq} implies boundedness of 
\[
\mathcal{L}\colon \mathrm{L}^{\infty}(0,\infty)\to \mathrm{L}^{q}(\CC_{+},d\mu').
\]
By Theorem~\ref{thm:InfinityImpliesPhi}, there exists an N-function $\Phi$ such that $\mathcal{L}\colon \mathrm{L}^{\Phi}(0,\infty)\to \mathrm{L}^{q}(\CC_{+},\mu')$, and since $\mathrm{L}^{\Phi}(0,2^M)\hookrightarrow \mathrm{L}^{\Phi}(0,\infty)$ isometrically, $\mathcal{L}\colon \mathrm{L}^{\Phi}(0,2^M)\to \mathrm{L}^{q}(\CC_{+},\mu')$ is bounded.

The proof will be complete once we have established that $\mathcal{L}\colon \mathrm{L}^{\Phi}(0,2^M)\to \mathrm{L}^{q}(\CC_{+},\mu^M)$ is bounded. Note that the N-function $\Phi$ obtained in the proof of Theorem~\ref{thm:InfinityImpliesPhi} is of the form $\Phi(t)=\tilde\Phi(t^{q'})$ for some other N-function $\tilde\Phi$. By H\"older's inequality for Orlicz spaces, it follows that $\mathrm{L}^{\Phi}(0,2^M)\hookrightarrow \mathrm{L}^{q'}(0,2^M)$. Repeating an argument from the proof of Theorem~\ref{thm:FiniteT}, $\LL\colon \mathrm{L}^{\Phi}(0,2^M)\to \mathrm{H}^q(\CC_{-2^{-M}})$, and $\mathrm{H}^q(\CC_{-2^{-M}})\hookrightarrow \mathrm{L}^{q}(\CC_+,d\mu^M)$, again by condition \eqref{eq:finiteC}.
\qed

\begin{corollary}    \label{thm:ZAdm}
Let $q\ge 2$, and $\mu$ be a positive regular Borel measure supported on $\CC_+$. 
Suppose that $\LL:\mathrm{L}^{\infty}(0,\tau_0) \to \mathrm{L}^{q}(\CC_+,\mu)$ is bounded for some $\tau_0 >0$.
Then
$$
   \lim_{\tau \to 0} \| \LL \|_{ \mathrm{L}^{\infty}(0, \tau) \to \mathrm{L}^{q}(\CC_+, \mu)} = 0.
$$
In fact, with $\Phi$ as in Theorem~\ref{thm:InfinityImpliesPhiFiniteTime}, it holds that 
\[
	\| \LL \|_{ \mathrm{L}^{\infty}(0, \tau) \to \mathrm{L}^{q}(\CC_+, \mu)}\le \| \LL \|_{ \mathrm{L}^{\Phi}(0, \tau_0) \to \mathrm{L}^{q}(\CC_+, \mu)}\|\chi_{[0,\tau]}\|_{\mathrm{L}^{\Phi}(0,\infty)}
\]
whenever $\tau\in (0,\tau_0]$.
\end{corollary}

\proof
Let $f\in \mathrm{L}^{\infty}(0,\infty)$ have unit norm, and support on $(0,\tau)$. With $\Phi$ as in Theorem~\ref{thm:InfinityImpliesPhiFiniteTime},
\[
	\|\LL f\|_{\mathrm{L}^{q}(\CC_+, \mu)}\le \| \LL \|_{ \mathrm{L}^{\Phi}(0, \tau_0) \to \mathrm{L}^{q}(\CC_+, \mu)}\|f\|_{\mathrm{L}^{\Phi}(0,\tau_0)}.
\]
The desired estimate now follows from $\|f\|_{\mathrm{L}^{\Phi}(0,\tau_0)}\le\|\chi_{[0,\tau]}\|_{\mathrm{L}^{\Phi}(0,\infty)}$.
\qed

\subsection{A Laplace--Carleson embedding for a specific class of Orlicz spaces}
In this section, we want to present applications of the theory developed above to some concrete Orlicz spaces.

In the following let 
\[\Phi(t)=\Phi_{\exp}(t):=\exp t-t-1\qquad\text{and}\qquad \tilde\Phi(t)=\tilde\Phi_{\exp}(t):=\exp \sqrt{t}-\sqrt{t}-1,\] so that $\Phi(t)=\tilde\Phi(t^2)$.
We will show that for this specific Young function, the boundedness of the Laplace--Carleson embedding from $\mathrm{L}^{\Phi}(0,1)$ to $\mathrm{L}^{2}(\CC_+,\mu)$ can be characterized in terms of the capacity, in an analogous way as in Theorem \ref{thm:CharacterizationLpToLq} for $\mathrm{L}^{\infty}$. 

\begin{theorem}    \label{thm:finiteTexp}
Let $\mu$ be a positive regular Borel measure  on $\CC_+$.
Then $\LL:\mathrm{L}^{\Phi}(0,1) \to \mathrm{L}^2(\CC_+,\mu)$ is bounded, if and only if

\begin{equation}    \label{eq:finiteC1}
	\sum_{n=1}^\infty n^2\mathcal{C}_2[\mu_n]	+ \sup_{I \text{ interval, } |I| =2 } \mu(Q_I)<\infty
\end{equation}
with an associated equivalence of norms.
\end{theorem}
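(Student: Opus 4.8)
The plan is to split the measure along the vertical line $\re z = 2$ and match the two pieces to the two terms of \eqref{eq:finiteC1}. Write $q=2$, so that $q'=2$ and, as observed in the text, $\Phi=\Phi_{\exp}$, $\tilde\Phi=\tilde\Phi_{\exp}$ with $\Phi(t)=\tilde\Phi(t^2)$. Set $\nu_1=\mu|_{\{\re z\ge2\}}=\sum_{n\ge1}\mu_n$ and $\nu_2=\mu|_{\{0<\re z<2\}}$, so $\mu=\nu_1+\nu_2$. For sufficiency on $\nu_1$ I would invoke Theorem~\ref{thm:SufficiencyLPhitoLq}, whose hypothesis holds precisely because $\Phi(t)=\tilde\Phi(t^2)$; since $\nu_1$ charges only the strips $S_n$ with $n\ge1$, it yields
\[
\|\LL\colon\mathrm{L}^{\Phi}(0,\infty)\to\mathrm{L}^2(\CC_+,\nu_1)\|^2\lesssim\sum_{n\ge1}\bigl(2^n\|\exp^{-2^n}\|_{\mathrm{L}^{\tilde\Phi^c}}\bigr)\mathcal{C}_2[\mu_n].
\]
The decisive computation is the weight estimate $2^n\|\exp^{-2^n}\|_{\mathrm{L}^{\tilde\Phi^c}}\approx n^2$: using Lemma~\ref{lemma:ExponentialOrliczIntegral} together with the asymptotics $\tilde\Phi^c(u)\approx u(\log u)^2$ of the complementary function, the defining equation $\int_0^\infty\tilde\Phi^c(e^{-2^nt}/C)\,dt=1$ reduces to $2^nC\approx(\log(1/C))^2$, whence $C\approx n^2 2^{-n}$. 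This bounds the $\nu_1$-embedding by $\sum_{n\ge1}n^2\mathcal{C}_2[\mu_n]$, and restricting along $\mathrm{L}^{\Phi}(0,1)\hookrightarrow\mathrm{L}^{\Phi}(0,\infty)$ preserves boundedness.

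For $\nu_2$, supported on the bounded strip $\{0<\re z<2\}$, I would argue exactly as at the end of the proof of Theorem~\ref{thm:FiniteT}. First $\mathrm{L}^{\Phi}(0,1)\hookrightarrow\mathrm{L}^2(0,1)$ by H\"older's inequality for Orlicz spaces (as $t\mapsto\tilde\Phi(t^2)$ dominates $t^2$), and $\LL\colon\mathrm{L}^2(0,1)\to\mathrm{H}^2(\CC_{-1,+})$ is bounded. Translating the half-plane to standard position turns $\mathrm{H}^2(\CC_{-1,+})\hookrightarrow\mathrm{L}^2(\CC_+,\nu_2)$ into $\mathrm{H}^2(\CC_+)\hookrightarrow\mathrm{L}^2(\CC_+,\tilde\nu_2)$, where $\tilde\nu_2(E)=\nu_2(E-1)$ is supported on $\{1\le\re z\le3\}$; by the Carleson embedding theorem this holds iff $\mathcal{C}_1[\tilde\nu_2]<\infty$. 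Covering an interval $I$ with $|I|\ge1$ by $\lceil|I|/2\rceil$ intervals of length $2$, and noting $\tilde\nu_2(Q_I)=0$ for $|I|<1$, gives $\mathcal{C}_1[\tilde\nu_2]\lesssim\sup_{|I|=2}\mu(Q_I)$. Adding the two pieces yields boundedness with norm $\lesssim$ the right-hand side of \eqref{eq:finiteC1}.

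For necessity, the second term is immediate: given $|I|=2$ with midpoint $ic$, test with $f=\chi_{[0,1]}e^{ict}$, which has $\|f\|_{\mathrm{L}^{\Phi}(0,1)}\approx1$; since $\re z\cdot t<2$ and $|(\im z-c)t|<1$ on $Q_I$, one gets $\re\LL f\gtrsim1$ there, and integrating over $Q_I$ yields $\mu(Q_I)\lesssim\|\LL\|^2$. The first term is the crux and requires a weighted version of Theorem~\ref{thm:NecessaryLpToLq}. Retaining the blocks $f_n=\chi_{(2^{-n-1},2^{-n}]}e^{ic_nt}$, I would test against $g_k=\sum_{m:\,mN+k\ge0}a_{mN+k}f_{mN+k}$ with weights $a_n\approx n$, and check two points: that $\|g_k\|_{\mathrm{L}^{\Phi}(0,1)}\lesssim1$, which follows since $\int\Phi(|g_k|/K)\,dt\approx\sum_j e^{cj/K}2^{-j}$ converges once $c<\log2$ and $K$ is a fixed multiple of $c$; and that the diagonal dominance $|G_k(z)|\ge\tfrac12 a_{nN+k}|F_{nN+k}(z)|$ on $T_{nN+k}$ survives the weights.

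The hard part is this last point. With weights present, Step~1 of Theorem~\ref{thm:NecessaryLpToLq} must be upgraded to $\sum_{m\ne n}a_{mN+k}|F_{mN+k}(z)|\lesssim 2^{-N}a_{nN+k}|F_{nN+k}(z)|$ on $T_{nN+k}$. Because $a_n\approx n$ grows only linearly, the estimate $\sum_{j\ne0}(nN+k+|j|N)2^{-N|j|}\approx(nN+k)2^{-N}$ shows the weighted off-diagonal sum is still $O(2^{-N})$ relative to the diagonal term, the linear growth rate cancelling in the ratio, so taking $N$ large restores dominance. Running the three-step scheme then gives $\sum_k\sum_n a_{nN+k}^2\,\mathcal{C}_2[\mu_{nN+k}]\lesssim\sum_k\|\LL g_k\|_{\mathrm{L}^2(\mu)}^2\lesssim N\|\LL\|^2$, that is $\sum_{n\ge1}n^2\mathcal{C}_2[\mu_n]\lesssim\|\LL\|^2$. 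Combining both directions delivers \eqref{eq:finiteC1} with the asserted equivalence of norms.
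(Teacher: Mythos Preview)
Your proposal is correct and follows essentially the same route as the paper: the same splitting of $\mu$ at $\re z=2$, the same use of Theorem~\ref{thm:SufficiencyLPhitoLq} together with the weight estimate $2^n\|\exp^{-2^n}\|_{\mathrm{L}^{\tilde\Phi^c}}\approx n^2$ for the far strip, the same $\mathrm{L}^{\Phi}(0,1)\hookrightarrow\mathrm{L}^2(0,1)$ plus Carleson-type embedding for the near strip, and the same weighted test functions $g_k=\sum_m a_m f_{k+mN}$ with linear weights for necessity. The only cosmetic differences are that the paper carries out the weight estimate by an explicit bound $\tilde\Phi^c\le\Psi$ (and offers an alternative via H\"older with exponent $p=N$), whereas you argue via the asymptotics $\tilde\Phi^c(u)\approx u(\log u)^2$ and Lemma~\ref{lemma:ExponentialOrliczIntegral}; and the paper indexes its weights by the outer summation index $m$ (taking $a_m=m\log 2$) rather than by the strip index $mN+k$, which differs from yours only by a fixed factor depending on $N$.
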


\proof
To prove the necessity we will reuse some notation and quantities from the proof of Theorem~\ref{thm:NecessaryLpToLq}. In particular, for each integer $n\ge 2$ we let $T_n$ denote the right half of a Carleson square $Q_{I_{n}}$ with side length $2^{n+1}$ and $\mathcal{C}_2[\mu_n]\le 2^{3-2n}\mu_n(T_n)$. Moreover, the functions $f_m=\chi_{(2^{-m-1},2^{-m}]}(t)e^{ic_mt}$, with $ic_m$ being the midpoint of $I_{{m}}$,  are $\mathrm{L}^{\infty}$-normalized functions with disjoint supports such that $F_m=\LL f_m$ is essentially localized to $T_m$: There exists $c,C>0$ for which
\begin{equation}   \label{eq:localization}
	z\in T_m\implies |F_m(z)|\ge c2^{-m}\quad \textnormal{and}\quad |F_n(z)|\le C2^{-m-|m-n|}.
\end{equation}

For a given $\epsilon>0$, we may choose $N$ such that
\[
	\sum_{m=1}^{n-1}m2^{mN}\le \epsilon n2^{nN}\quad \textnormal{and}\quad \sum_{m=n+1}^{\infty}m2^{-mN}\le \epsilon n2^{-nN}
\]
uniformly in $n$. This can be seen by comparison with a Riemann integral. For such an $N$ and $k \in \{0, \dots, N-1\}$, let 
\begin{equation}   \label{eq:gdef}
g_k = ( \log 2 )\sum_{m=0}^\infty   m  f_{k+ mN}.
\end{equation}
and write $G_k= \LL g_k$. Note that
\[
\int_0^1 \Phi(|g_k(t)|) dt \leq \int_0^1 e^{|g_k(t)|} dt = \sum_{m=0}^\infty  2^{-(k+mN+1)} 2^m \le 1,
\]
whence $\|g_k\|_{\Phi} \le 1$. Moreover, for $z \in T_{k+nN}$
\begin{eqnarray*}
	\sum_{m \in \ZZ, m \ge 0, n \neq m}  m  |F_{k+ mN}(z)| &\le& C  \sum_{m \in \ZZ, m \ge 0, n \neq m} m 2^{- (k+nN + |m-n|N)} \\
	&=& C  \sum_{m = 0}^{n-1} m 2^{- (k+2nN -mN)} \\&& +\  C  \sum_{m = n+1}^{\infty}  m 2^{- (k+mN)}\\
	&\le& C \epsilon n 2^{1-k-nN}\\
	&\le& \frac{2C\epsilon n}{c}|F_{k+nN}(z)|,
\end{eqnarray*}
and hence
\begin{eqnarray*}
	|G_k(z)| &\ge&  (\log 2) \left( n  |F_{k+ nN}(z)| - \sum_{m \in \ZZ, m \ge 0, n \neq m}  m  |F_{k+ mN}(z)|  \right) \\
	&\gtrsim&  n |F_{k+nN}(z)| \ge cn 2^{-k-nN},
\end{eqnarray*}
provided that $\epsilon$ is sufficiently small. A possible choice is $\epsilon = \frac{1}{8} \frac{c}{C}$, where $c,C$ are the constants from (\ref{eq:localization}).

Hence for $k \in \{0, \dots, N-1\}$,
$$
\sum_{n=1}^\infty   n^2 \mathcal{C}_2[\mu_{k+ nN}] \lesssim   \sum_{n=1}^\infty  n^2 2^{-2(k+nN)} \mu(T_{nN+k}) \lesssim_N \int_{\CC_+} |G_k(z)|^2 d\mu.
$$
Adding over $k=0, \dots, N-1$, we obtain the required norm bound of the first term in (\ref{eq:finiteC1}), with a constant only depending on $N$ (therefore on $\epsilon$, and hence only on $c, C$).  To control the second term in (\ref{eq:finiteC1}), just consider $f= e^{it c_I} \chi_{(0,1)}$,
where $c_I$ is the midpoint of the interval $I$.
\medskip

To prove the sufficiency of Condition \eqref{eq:finiteC1}, note first that boundedness of
$$
\LL:\mathrm{L}^{\Phi}(0,1) \to \mathrm{L}^2(\CC_+,\mu^{-1})
$$
follows immediate from the continuous embedding $\mathrm{L}^{\Phi}(0,1) \subset L^2(0,1)$, together with
the Carleson Embedding Theorem for Paley--Wiener spaces, see e.g. \cite{PPZ}. Here, as in the notation of Theorem \ref{thm:FiniteT}, $\mu^{-1}$ denotes the restriction of $\mu$ to the strip $\{z \in \CC:  0 \le \Re z \le 2 \}$.

For the remaining part of the measure $\mu$, one may use
a straightforward adaptation of Theorem~\ref{thm:SufficiencyLPhitoLq},
\begin{equation*}
	\|\mathcal{L}\colon \mathrm{L}^{\Phi}(0,1)\to \mathrm{L}^2(\CC_{+},d\mu)\|^2\lesssim
	\sum_{n=1} 2^{n}\|\exp^{-2^{n}}\|_{\mathrm{L}^{\tilde \Phi^c}(0,1)}\mathcal{C}_{2}[\mu_n].
\end{equation*}
Thus, in order to conclude the proof of sufficiency, it is enough to establish the estimate
\[
	2^n\|\exp^{-2^n}\|_{\mathrm{L}^{\tilde{\Phi}^c}(0,1)}\lesssim n^2 \qquad \forall n\in \NN.
\]
We thus need to show that for sufficiently large $B$, it holds that
\[
\int_0^1\tilde{\Phi}^c\left(\frac{2^n\exp(-2^nt)}{Bn^2}\right)\, dt\le 1.
\]

This is indeed possible but requires a somewhat arduous explicit computation.
It suffices to do this for large $n$, since the above integral is always finite and converges to $0$ as $B\to\infty$.
It is straightforward to see that \[\tilde\phi(t):=\tilde{\Phi}'(t)=\frac{\exp\sqrt{t}-1}{2\sqrt{t}},\] and by a comparison of power series,
\[
\frac{\exp\left(\sqrt{t}/2\right)}{2}\le \tilde{\phi}(t)\le\frac{\exp\left(\sqrt{t}\right)}{2}.
\]
It follows that $\tilde\phi^c$, the left-continuous inverse of $\tilde\phi$, vanishes on $[0,1/2]$, and satisfies 
\[
\left(\log\left(2t\right)\right)^2\le \tilde \phi^c(t)\le 4\left(\log\left(2t\right)\right)^2
\]
for $t>1/2$. If one defines
\[
\Psi(t)=\begin{cases}
	0,& t\in[0,1/2],\\
	4\int_{1/2}^t\left(\log\left(2s\right)\right)^2\, ds, & t>1/2,
\end{cases}
\]
then $\tilde{\Phi}^c(t)\le \Psi(t)$. Assuming $n$ is sufficiently large for $2^n\exp(-2^n)/Bn^2<1/2$, we apply Fubini's theorem to obtain
\begin{eqnarray*}
	\int_0^1\tilde{\Phi}^c\left(\frac{2^n\exp(-2^nt)}{Bn^2}\right)\, dt &\le &
	\int_0^{\frac{1}{2^n}\log\left(\frac{2^{n+1}}{Bn^2}\right)}\Psi\left(\frac{2^n\exp(-2^nt)}{Bn^2}\right)\, dt
	\\
	&=&
	4\int_{t=0}^{\frac{1}{2^n}\log\left(\frac{2^{n+1}}{Bn^2}\right)}\int_{s=1/2}^{\frac{2^n\exp(-2^nt)}{Bn^2}}\left(\log\left(2s\right)\right)^2\, dsdt
	\\
	&=&
	4\int_{s=1/2}^{2^n/Bn^2}\left(\log\left(2s\right)\right)^2\int_{t=0}^{\frac{1}{2^n}\log\left(\frac{2^{n+1}}{Bn^2s}\right)}\, dtds
	\\
	&=&
	4\int_{s=1/2}^{2^n/Bn^2}\left(\log\left(2s\right)\right)^2\frac{1}{2^n}\log\left(\frac{2^{n+1}}{Bn^2s}\right)\, ds.
\end{eqnarray*}
Through a rather arduous calculation, one finds the limit of the above integral as $n\to\infty$ to be $4\left(\log 2\right)^2/B$. 
\qed

 As an alternative to the concrete calculations in the proof above, we may take a slightly different path and observe that
\begin{lemma} Let $N \in \ZZ$, $N \ge 0$ and let $\Phi(t)=\exp t-t-1$ for $t \ge 0$. Then 
	$\LL:  \mathrm{L}^{\Phi}(0,1) \rightarrow H^2(\CC_{+,{2^N}}) $
	is bounded with norm 
	$$
	\| \LL \|_{  \mathrm{L}^{\Phi}(0,1)\ \to H^2(\CC_{+,{2^N}})} \lesssim N \frac{1}{2^{N/2}}.
	$$
\end{lemma}
\begin{proof}
	Let $\| f \|_{ \mathrm{L}^{\Phi}} =1$. Note that by the Paley--Wiener Theorem, it is enough to prove that
	\begin{equation}  \label{eq:Nest}
	\|   f  \exp^{- 2^N }  \|_2 \lesssim N  \frac{1}{2^{N/2}}.
		\end{equation}
	Let $p, q >2$ with $\frac{1}{p} + \frac{1}{q} = \frac{1}{2}$. Then by H\"older's inequality,
	$$
	\|   f  \exp^{- 2^N  }  \|_2  \lesssim   \|f \|_{p}    \|   \exp^{- 2^N  }\|_{q} \lesssim p  \frac{1}{2^{N/q}},
	$$
	with constants independent of $p$, $N$, where we use the weak exponential integrability of $f$,
	$$
	|\{ t \in (0,1): |f(t)| > \alpha \}| \lesssim  e^{-\alpha},
	$$
	and standard estimates of the $\Gamma$-function. Choosing $p=N$,
we find in case $N \ge 2$ 
	$$
	\|   f  e^{- 2^N t }  \|_2 \lesssim   N \frac{1}{2^{N/2}}.
	$$
	In case $N=0,1$, the estimate follows trivially from $  \mathrm{L}^{\Phi}(0,1) \subset L^2(0,1)$.
\end{proof}
	
To finish the proof of Theorem \ref{thm:finiteTexp}, note that the embedding
$$
H^2_{\CC_{+,2^n}} \rightarrow L^2(\mu_{n+1})
$$
has norm equivalent to  $\left(C_2[\mu_{n+1}]\right)^{1/2}$ by the classical Carleson Embedding Theorem, applied to the shifted half-plane $\CC_{+,2^n}$.

The remainder follows now from the decomposition of $\CC_+$ into the strips $S_n$, $n \ge 1$, together with the strip $ \{ z \in \CC_+: 0 \le \Re z \le 2\}$,
and the inclusion $  \mathrm{L}^{\Phi} \subset L^2(0,1)$:

\begin{eqnarray*}
\| \LL f \|^2_{\mathrm{L}^2(\CC_+,\mu)} &\le& 
\| \LL f \|^2_{\mathrm{L}^2(S,\mu)}  + \sum_{N \ge -1} \| \LL f \|^2_{\mathrm{L}^2(\CC_+,\mu_{N+1})}\\
&\lesssim& \|f \|_2^2 + \sum_{N \in \ZZ} 2^N C_2[\mu_{N+1}] \|\LL f \|^2_{H^2_{\CC_{+,2^N}}} \\
	&\lesssim& \left(1 +\sum_{N \ge 0} N^2 C_2[\mu_{N}] \right) \| f\|_{ \mathrm{L}^{\Phi}}.
\end{eqnarray*}
This proof extends without difficulty to the case of the Young function
$\Phi_\alpha(t) = \exp(t^\alpha) - t^\alpha -1$ on $[0,1]$, where $\alpha \ge 1$.

Using analogous estimates and choosing $p= N \alpha$ in the application of H\"older's inequality, we
obtain the correct analogue of
$ (\ref{eq:Nest})$:

\begin{equation}
		\|   f  \exp^{- 2^N }  \|_2 \lesssim N^{1/\alpha}  \frac{1}{2^{N/2}}   \text{ for }  \| f \|_{ \mathrm{L}^{\Phi_\alpha}} \le 1  .
\end{equation}

The rest of the sufficiency proof follows as above. The proof of necessity again follows along the same
lines, replacing the test function $g_k$ in  (\ref{eq:gdef}) by
\begin{equation}  
	g_k = ( \log 2 )^{1/\alpha} \sum_{m=0}^\infty   m^{1/\alpha}  f_{k+ mN}.
\end{equation}

Altogether, we obtain
\begin{theorem}    \label{thm:finiteTexpalpha}
	Let $\mu$ be a positive regular Borel measure supported on $\CC_+$ and let $\alpha >1$. 
	Then $\LL:\mathrm{L}^{\Phi_\alpha}(0,1) \to \mathrm{L}^2(\CC_+,\mu)$ is bounded, if and only if
		\begin{equation}    \label{eq:finiteexpalpha}
		\sum_{n=1}^\infty n^{2/\alpha} \mathcal{C}_2[\mu_n]   	+ \sup_{I \text{ interval, } |I| =2 } \mu(Q_I) <\infty	
	\end{equation}
	with an associated equivalence of norms.
\end{theorem}

\begin{remark}
	An inspection of the proof above reveals that the implied constants can be chosen independent of $\alpha$. Hence Theorem \ref{thm:CharacterizationLpToLq} may (in case $q=2$) be obtained as a limiting case of Theorem \ref{thm:finiteTexpalpha}, in the limit $\alpha \to \infty$.
\end{remark}

\section{Admissible operators}\label{sec2}

In this section we draw the connection of the derived results on Laplace--Carleson embeddings to linear control systems of the form (\ref{eq:ABsystem}),
\begin{equation}
	\dot x(t)=Ax(t)+Bu(t), \qquad x(0)=0, \quad t \ge 0.
\end{equation}

Here $(T(t))_{t\ge 0}$ is a $C_0$-semigroup of bounded linear operators on a Banach space $X$. Its infinitesimal generator is denoted by $A$, which is a closed operator with dense domain $D(A)$. As is well-known, the semigroup $(T(t))_{t \ge 0}$ has a unique extension to $X_{-1}$, which is the completion of $X$ with respect to the norm $\|(\beta-A)^{-1}\cdot\|_X$, where $\beta\in\rho(A)$ is fixed but arbitrary. With a slight abuse of notation, this extension is again denoted by $(T(t))_{t \ge 0}$. For future reference, we note that if $X$ is additionally reflexive, then that $X_{-1}$ is isomorphic to the dual of $D(A^*)$, provided that one uses the duality pairing of $X$ and $D(A^*)$ is equipped with the graph norm. Finally, $B\colon U\to X_{-1}$ is bounded and linear, where $U$ is a Banach space, and $Z(0,t_0;U)$ is a Banach space of $U$-valued functions on $(0,t_0)$. Our main interest is when $Z$ is an Orlicz type space $L^\Phi$, of which $L^p$ is a special case. 

By a closed graph argument and the semigroup property, the notion of an admissible operator $B$ can be rephrased as follows.
\begin{definition}
Let $Z$ be an Orlicz space.  An operator $B \in L(U,X_{-1})$ is called {\em $Z$-admissible} (for $(T(t))_{t \ge 0}$ or $A$),  %
if for all $t_{0}>0$ and all $u\in Z(0,t_{0};U)$ it holds that
  \[ \Theta u=\Theta_{t_{0}}u=\int_0^{t_{0}} T(t_{0}-s)B u(s) \, \mathrm{d}s\in X.\]
Furthermore, we define the following two refinements of admissibility.
We say that 
\begin{itemize}
\item $B$  is {\emph{zero-class} $Z$-admissible}, if $\lim_{t_{0}\to0^{+}}\|\Theta_{t_{0}}\|_{\mathcal{L}(Z(0,t_{0};U),X)}=0$, and 
\item $B$ is {\emph{infinite-time} $Z$-admissible}, if $\sup_{t_{0}>0}\|\Theta_{t_{0}}\|_{\mathcal{L}(Z(0,t_{0};U),X)}<\infty$.
\end{itemize}

 \end{definition}
 Note that $B$ is infinite-time $Z$-admissible if and only if the operator
 \[ Z(0,\infty;U)\to X, u\mapsto \int_{0}^{\infty}T(s)Bu(s)\mathrm{d}s\]
 is bounded. 
We further mention that admissibility may be studied for other choices of function spaces $Z$, such as weighted $\mathrm{L}^{p}$-spaces \cite{HLM} and Sobolev spaces \cite{jpp12}. The interest in Orlicz spaces arises in the connection of admissibility to (integral) input-to-state stability for infinite-dimensional systems, see \cite{JNPS}.

Unbounded admissible operators, that is, operators $B$ not bounded as a mapping from $U$ to $X$,  naturally appear in the study of boundary control of evolution equations. 
The most commonly studied case in the literature is $Z=\mathrm{L}^{2}$ and we refer to the
survey \cite{jp} and the book \cite{TW} for the basic background to admissibility in the context of well-posed and boundary control systems. 
The general case was already studied in the seminal works by Weiss \cite{Weiss89i,Weiss89ii}, where the notion of ``admissibility'' was coined, although it had appeared earlier, e.g.\ \cite{Salamon}. See also \cite{haakthesis}, where several results previously known for $p=2$ were generalized. Admissible operators with respect to Orlicz spaces, $Z=\mathrm{L}^{\Phi}$, were studied in \cite{JNPS} and we refer to that paper for elementary facts of $Z$-admissible operators.

It is easy to see that the property of admissibility does not depend on the choice of $t_{0}$, which justifies the fact that we omit the reference to $t_{0}$ in the operator $\Theta$.
Let us fix the following notation for a semigroup generator $A$ on $X$:
\[
\mathfrak{B}_{Z}(A,U)=\{B\in L(U,X_{-1})\colon B\text{ is an }Z\text{-admissible control operator for }A\}.
\]
 The inclusions 
	\begin{equation}\label{eq:inclusions}
   		\mathfrak{B}_{\mathrm{L}^{1}}(A,U)   \subseteq \mathfrak{B}_{\mathrm{L}^{p}}(A,U)\subseteq \mathfrak{B}_{\mathrm{L}^{\infty}}(A,U), \quad p\in[1,\infty],
	\end{equation} 
are clear by the nesting properties of Orlicz spaces.
A question in which we are particularly interested in is when $\mathfrak{B}_{\mathrm{L}^{p}}(A,U)=\mathfrak{B}_{\mathrm{L}^{\infty}}(A,U)$ for some $p\in[1,\infty)$. This is non-trivial as examples of semigroups are known for which all inclusions in \eqref{eq:inclusions} are strict (for all $p\in[1,\infty)$), see \cite[Example 5.2]{JNPS} and \cite{jpp12}. One should note that these are examples on Hilbert spaces $X$, whereas the following, simpler, example shows that the situation on Banach spaces only becomes worse.

\begin{example}\label{ex1}
	Let $X=L^p(0,\infty)$ with $1<p<\infty$, and $U=\mathbb{C}$. The right-shift semigroup $(T_{p}(t))_{t\ge0}$ on $X$, with generator $A_{p}$, is defined by the dual action $T_{p}(t)^*f(x)=f(t+x)$ for $t,x\ge0$. Similarly, define $B\colon\mathbb{C}\to D(A_p^*)'$ by its dual action $B^*f=f(0)$ for $f\in D(A_{p}^*)$. By calculation,
	\[
	\langle\Theta u,\varphi\rangle_{L^p(0,\infty)} 
	=
	\int_0^{t_0}\langle T_p(t_0-s)Bu(s),\varphi\rangle_{L^p}\,\mathrm{d}s
	=
	\int_0^{t_0}u(s)\varphi(t_0-s)\,\mathrm{d}s
	\]
	whenever $\varphi$ is smooth with compact support. It follows that $B$ is $\mathrm{L}^{p}$-admissible, but fails to be $\mathrm{L}^{q}$-admissible for any $q<p$. In particular, this shows that $\mathfrak{B}_{\mathrm{L}^{q}}(A_{2},\mathbb{C})\subsetneq\mathfrak{B}_{\mathrm{L}^{2}}(A_{2},\mathbb{C})$ for $q<2=p$.
\end{example}

\subsection{ Left-invertible semigroups on Hilbert spaces}

In \cite{Weiss89ii}, it was (implicitly) shown that $\mathfrak{B}_{\mathrm{L}^{\infty}}(A,\CC)=\mathfrak{B}_{\mathrm{L}^2}(A,\CC)$ for $A$ being the periodic left-shift semigroup on $\mathrm{L}^{2}(0,2\pi)$, corresponding to the control of a one-dimensional wave equation. It turns out that this result holds true in a much more general setting. This is a rather direct consequence of another result by G.~Weiss, which was derived in the context of what later became known as the Weiss conjecture, \cite{WeissConjectures91}.
\begin{theorem}\label{thm1}
Let $A$ generate a left-invertible semigroup on a Hilbert space $X$. Then for any Hilbert space $U$ it holds that
$$\mathfrak{B}_{\mathrm{L}^{\infty}}(A,U)=\mathfrak{B}_{\mathrm{L}^2}(A,U).$$
\end{theorem}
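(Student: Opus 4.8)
The plan is to prove the two inclusions separately. The inclusion $\mathfrak{B}_{\mathrm{L}^2}(A,U)\subseteq\mathfrak{B}_{\mathrm{L}^\infty}(A,U)$ requires no hypothesis on the semigroup: it is already part of \eqref{eq:inclusions}, and follows from the fact that on a bounded interval $(0,t_0)$ one has $\|u\|_{\mathrm{L}^2}\le\sqrt{t_0}\,\|u\|_{\mathrm{L}^\infty}$, so boundedness of $\Theta_{t_0}$ on $\mathrm{L}^2$ forces boundedness on the smaller space $\mathrm{L}^\infty$. Hence the entire content of the theorem is the reverse inclusion $\mathfrak{B}_{\mathrm{L}^\infty}(A,U)\subseteq\mathfrak{B}_{\mathrm{L}^2}(A,U)$: for a left-invertible semigroup, every $\mathrm{L}^\infty$-admissible control operator should already be $\mathrm{L}^2$-admissible.

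My first step would be to translate the problem into the language of observation operators by duality. Since $X$ and $U$ are Hilbert spaces, the adjoint of $\Theta_{t_0}$ is $(\Theta_{t_0})^*x=B^*T(t_0-\cdot)^*x$, and a direct computation shows that for $p\in\{2,\infty\}$ the norm of $\Theta_{t_0}$ on $\mathrm{L}^p(0,t_0;U)$ equals the norm of the observation map $x\mapsto B^*T(\cdot)^*x$ into $\mathrm{L}^{p'}(0,t_0;U)$, where $p'$ is the conjugate exponent (the case $p=\infty$ giving $p'=1$ via the choice of an extremal unimodular input). Thus $B$ is $\mathrm{L}^p$-admissible for $A$ if and only if the observation operator $C:=B^*$ is $\mathrm{L}^{p'}$-admissible for the adjoint semigroup $(T(t)^*)_{t\ge0}$ generated by $A^*$. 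Under this dictionary, left-invertibility of $(T(t))_{t\ge0}$, i.e.\ that each $T(t)$ is bounded below, is equivalent to surjectivity of each $T(t)^*$. The theorem therefore reduces to showing that, for a surjective semigroup on a Hilbert space, $\mathrm{L}^1$-admissibility of an observation operator implies its $\mathrm{L}^2$-admissibility.

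The heart of the argument is this upgrade from $\mathrm{L}^1$ to $\mathrm{L}^2$, where I would invoke the Paley--Wiener circle of ideas from \cite{WeissConjectures91}. Writing $\eta_x(t)=CT(t)^*x$, the Laplace transform identifies $\eta_x$ with $\lambda\mapsto C(\lambda-A^*)^{-1}x$ on a right half-plane, and, after an exponential rescaling $T(t)\mapsto e^{-\omega t}T(t)$ reducing to the stable case, $\mathrm{L}^2$-admissibility becomes an $H^2$-type bound on this transfer map via Paley--Wiener. The intertwining relation $\eta_{T(s)^*x}(t)=\eta_x(t+s)$ shows that $x\mapsto\eta_x$ conjugates $T(s)^*$ to the left shift; surjectivity of $T(s)^*$ then ensures that shifting the output corresponds to an honest change of initial state carrying a two-sided norm control, and this coercivity is exactly what converts the weaker $\mathrm{L}^1$ estimate supplied by $\mathrm{L}^\infty$-admissibility of $B$ into the Hilbert-space $\mathrm{L}^2$ estimate. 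Here one repeatedly uses that admissibility is independent of $t_0$, so that the constants produced by left-invertibility on a single fixed interval are enough.

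The step I expect to be the main obstacle is precisely this final conversion: making rigorous that surjectivity of the dual semigroup furnishes the missing lower bound, and that the $\mathrm{L}^1$ control available on all shifted subintervals closes up into a single global $\mathrm{L}^2$ bound rather than merely a family of local ones. Subsidiary points needing care are the equivalence of left-invertibility of $(T(t))$ with surjectivity of $(T(t)^*)$, with uniform constants on compact time intervals; the verification that the rescaling preserves both admissibility and left-invertibility; and the clean identification, in the Paley--Wiener step, of finite-time $\mathrm{L}^2$-admissibility with the half-plane $H^2$ bound.
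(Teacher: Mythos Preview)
Your plan is sound in outline and in fact converges to the same citation the paper uses: both arguments ultimately rest on \cite[Theorem~4.1]{WeissConjectures91}. The difference is that the paper's proof is much shorter because it does not dualize and does not attempt to unpack Weiss's argument. The paper simply observes that $\mathrm{L}^{\infty}$-admissibility of $B$ yields the uniform resolvent bound
\[
\sup_{\Re\lambda>\alpha}\|(\lambda-A)^{-1}B\|<\infty
\]
for some $\alpha\in\RR$ (this follows by testing $\Theta_{t_0}$ on exponential inputs and summing over translates, or equivalently by taking Laplace transforms), and then invokes the dual form of \cite[Theorem~4.1]{WeissConjectures91} (see also \cite{zwart2005}) as a black box: for left-invertible semigroups on Hilbert spaces, this resolvent bound already forces $\mathrm{L}^{2}$-admissibility.

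Your acknowledged ``main obstacle''---the $\mathrm{L}^{1}\to \mathrm{L}^{2}$ upgrade for the observation operator $C=B^{*}$ under the surjective dual semigroup---is exactly the content of that theorem, and the informal mechanism you sketch (surjectivity gives a two-sided norm control which, together with the shift intertwining, converts the $\mathrm{L}^{1}$ bound into an $\mathrm{L}^{2}$ bound) does not by itself constitute a proof. The actual argument in \cite{WeissConjectures91} passes through the resolvent condition: one renorms so that the left-invertible semigroup becomes isometric, and then uses the specific Hilbert-space structure of isometric semigroups (Wold decomposition into a unitary part and a shift part) together with Paley--Wiener to identify the resolvent bound with the $H^{2}$ bound. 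There is no shortcut that goes directly from an $\mathrm{L}^{1}$ estimate on intervals to a global $\mathrm{L}^{2}$ estimate via ``coercivity'' alone.

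If you wish to keep the dualized picture, the cleanest fix is: from $\mathrm{L}^{1}$-admissibility of $C$ deduce $\sup_{\Re\lambda>\alpha}\|C(\lambda-A^{*})^{-1}\|<\infty$ (immediate, since $C(\lambda-A^{*})^{-1}x=\int_{0}^{\infty}e^{-\lambda t}CT(t)^{*}x\,dt$ and $|e^{-\lambda t}|\le 1$), and then cite Weiss's theorem in its original observation form. But at that point the duality detour buys nothing, and the paper's two-line version is preferable.
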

\begin{proof}
This basically follows by \cite[Theorem 4.1]{WeissConjectures91} which is a slight generalization of an older result by Hansen and Weiss \cite{HansenWeiss}. In fact, let $B\in\mathfrak{B}_{\mathrm{L}^{\infty}}(A,U)$. Then, it follows by the definition of $\mathrm{L}^{\infty}$-admissibility and the Laplace transform that 
$$\sup_{\Re\lambda>\alpha}\|(\lambda \mathrm{I}-A)^{-1}B\|<\infty$$
for some $\alpha\in\RR$. By (the dual version of) \cite[Theorem 4.1]{WeissConjectures91}, see also \cite{zwart2005}, this implies that $B$ is $\mathrm{L}^{2}$-admissible.
\end{proof}

Example \ref{ex1} shows that the assumption that $X$ is a Hilbert space in Theorem \ref{thm1} cannot be dropped in general, even in the specific case of $U=\CC$. However, for the specific case of $A_{{r},\mathrm{per}}$ being the generator of the periodic left-shift semigroup on $\mathrm{L}^{r}(0,2\pi)$, characterizations of $\mathfrak{B}_{\mathrm{L}^{p}}(A_{r,\mathrm{per}},\CC)$ can be derived from results on Fourier multipliers, \cite[Proposition 5.2]{Weiss89ii}; more precisely, 
\begin{eqnarray*}
\lefteqn{\mathfrak{B}_{\mathrm{L}^{p}}(A_{r,\mathrm{per}},\CC)}\\
&=&\{b\in S_{\mathrm{per}}[0,2\pi]\colon f\mapsto \sum_{k\in\ZZ}\hat{b}(k)\hat{f}(k)\mathrm{e}^{ikt} \in \mathcal{L}(\mathrm{L}^{p}(0,2\pi),\mathrm{L}^{r}(0,2\pi))\},
\end{eqnarray*}
where $\hat{h}(k)$ denotes the $k$-th Fourier coefficient and $S_{\mathrm{per}}[0,2\pi]$ the periodic distributions on $[0,2\pi]$. By known facts on multipliers, this implies in particular that 
\[\mathfrak{B}_{\mathrm{L}^{p}}(A_{r,\mathrm{per}},\CC)=\mathfrak{B}_{\mathrm{L}^{\infty}}(A_{r,\mathrm{per}},\CC)\qquad\text{ for all }p\ge2 \text{ and }r\leq2,\]
which generalizes the assertion of Theorem \ref{thm1} in the situation of this special generator. This observation motivates 
studying the relation of the sets $\mathfrak{B}_{\mathrm{L}^{p}}(A,\mathbb{C})$ for more general group generators of {\em diagonal} form. This is treated in the next section. Also note that the facts on Fourier multipliers used above give a glimpse on why the relation between the sets $\mathfrak{B}_{\mathrm{L}^{p}}(A,U)$ for different $p$ is non-trivial in general. 
We shall see a related result in
 Theorem \ref{mainsec3} below.

\subsection{Diagonal $C_0$-semigroups}
\label{sec:diagroups}

In this section we assume that the semigroup generator $A$ is diagonal with respect to a (Schauder) basis of $X$. More precisely, fix
 $1 \le q < \infty$ and a $q$-Riesz basis $(\phi_k)_{k\in\ZZ}$ of $X$, i.e.,
 for some $C_1, C_2 > 0$ we have that for all finite sequences $(a_{k})_{k}$,

\[
C_1 \sum_{k} |a_k|^q \le \| \sum_{k} a_k \phi_k \|^q \le C_2 \sum_{k} |a_k|^q.
\]
Let $A:D(A)\subset X\to X$ be an operator such that $\phi_{n}\in D(A)$ for all $n\in\ZZ$ and $A\phi_{n}=\lambda_{n}\phi_{n}$ for a complex sequence  $(\lambda_n)_{n}$ in a left-half plane of $\mathbb{C}$.  This implies that $A$ generates a strongly continuous semigroup $(T(t))_{t\ge0}$ with $T(t)\phi_{n}=\mathrm{e}^{\lambda_{n}t}\phi_{n}$ for all $n\in \ZZ$, $t\ge0$. 

In the above situation we say that {\emph{$A$ generates a diagonal semigroup with respect to the $q$-Riesz basis $(\phi_{n})_{n}$}. If the sequence $(\lambda_{n})_{n}$ lies in a vertical strip of the complex plane, we say that $A$ generates a {\emph{diagonal group}} with respect to the $q$-Riesz basis.

Note that the eigenvalues $(\lambda_n)_{n}$ lie in the open left-half plane $\mathbb{C}_{-}$ if and only if $(T(t))_{\ge0}$ is strongly stable, i.e.\ $\lim_{t\to\infty}\|T(t)x\|=0$ for all $x\in X$.
Without loss of generality   we may set $X=\ell^q$ and  
choose $\phi_{n}$ to be the $n$-th canonical basis vector of $\ell^q$.   
Further, we assume $U=\mathbb C$. It follows that every operator $B\in L(U,X_{-1})$ can be represented by a sequence $(b_n)_{n\in\ZZ}$ in
 \[X_{-1}\cong\left\{b\in \CC^{\mathbb{Z}}\colon \left(\frac{b_{n}}{\lambda_{n}-\lambda}\right)_{n\in\ZZ}\in \ell^{q} \right\}\]
  for some $\lambda$ in the resolvent set $\rho(A)$ of $A$. We use the analogous notation if the index set $\ZZ$ is replaced by $\NN$. 
  
We can
 link admissibility with the  boundedness of Laplace--Carleson embeddings: the following result was proved in \cite{jpp12} only for $Z=\mathrm{L}^{p}$ with $1\le p<\infty$, but the case $p=\infty$ and even $Z=\mathrm{L}^{\Phi}$ for some Young function $\Phi$ follows analogously.
\begin{proposition}[Theorem 2.1 in \cite{jpp12}]
\label{propequiv}
Let $q \ge 2$ and let $A:D(A)\subset X\rightarrow X$ generate a strongly stable diagonal semigroup $(T(t))_{t\ge0}$ with respect to a $q$-Riesz basis of $X$. 
Let $Z$ be an Orlicz space. The  operator $B\in L(U,X_{-1})$ is infinite-time $Z$-admissible for $(T(t))_{t \ge 0}$
if and only if 
the Laplace--Carleson embedding
\[\LL f(z):= \int_0^\infty e^{-zt} f(t) dt,\qquad z\in \CC_{+}\]
 induces a continuous mapping from $Z(0,\infty)$
into $\mathrm{L}^{q}(\CC_+,d\mu)$, where $\mu$ is the measure $\sum |b_k|^{q}\delta_{-\lambda_k}$.
\end{proposition}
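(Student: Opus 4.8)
The plan is to exploit the diagonal structure to reduce infinite-time $Z$-admissibility to an explicit comparison of norms, after which the equivalence with the Laplace--Carleson embedding is immediate. As recorded in the remark following the definition of admissibility, $B$ is infinite-time $Z$-admissible precisely when the map
\[
\Theta_\infty\colon Z(0,\infty)\to X,\qquad \Theta_\infty u=\int_0^\infty T(s)Bu(s)\,\mathrm{d}s,
\]
is bounded, so it suffices to compare $\|\Theta_\infty u\|_X$ with $\|\LL u\|_{\mathrm{L}^{q}(\CC_+,\mu)}$.

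First I would compute the coordinates of $\Theta_\infty u$. Working initially with a convenient dense class of inputs---for instance compactly supported bounded $u$, where all integrals converge absolutely and Fubini applies---the identifications $X=\ell^q$, $B\cong(b_n)_n$, and $T(s)\phi_n=\mathrm{e}^{\lambda_n s}\phi_n$ give, upon pairing with the $n$-th coordinate functional,
\[
(\Theta_\infty u)_n=b_n\int_0^\infty \mathrm{e}^{\lambda_n s}u(s)\,\mathrm{d}s=b_n\,\LL u(-\lambda_n).
\]
Here $-\lambda_n\in\CC_+$ by strong stability ($\Re\lambda_n<0$), and for each fixed $n$ the scalar integral converges because $\mathrm{e}^{\lambda_n\cdot}$ decays exponentially. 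The slightly delicate point is justifying that the bounded coordinate functionals on $X_{-1}$ may be moved inside the improper $X_{-1}$-valued integral; this is where restriction to a dense nice subclass, together with the smoothing of $T(s)$ on $X_{-1}$, is used.

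Next I would apply the $q$-Riesz basis equivalence to pass from coordinates to the norm. Since $C_1\sum_n|(\Theta_\infty u)_n|^q\le\|\Theta_\infty u\|_X^q\le C_2\sum_n|(\Theta_\infty u)_n|^q$, combining with the coordinate formula yields
\[
\|\Theta_\infty u\|_X^q\asymp\sum_n|b_n|^q\,|\LL u(-\lambda_n)|^q=\int_{\CC_+}|\LL u(z)|^q\,\mathrm{d}\mu(z)=\|\LL u\|_{\mathrm{L}^{q}(\CC_+,\mu)}^q,
\]
the middle equality being merely the definition $\mu=\sum_k|b_k|^q\delta_{-\lambda_k}$. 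Thus, on the dense subclass, the two quantities are comparable up to the absolute constants $C_1,C_2$.

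Finally I would conclude. If $\LL\colon Z(0,\infty)\to\mathrm{L}^{q}(\CC_+,\mu)$ is bounded, then the comparison shows $\sup_{\|u\|_Z\le1}\|\Theta_\infty u\|_X<\infty$ on the dense subclass, so $\Theta_\infty$ extends boundedly to all of $Z(0,\infty)$ with values in $X=\ell^q$ (the comparison also guarantees the limit lies in $\ell^q$, not merely in $X_{-1}$), giving infinite-time admissibility; conversely, boundedness of $\Theta_\infty$ forces the same finiteness and hence boundedness of $\LL$. The main obstacle is the rigorous extension from the nice subclass to a general $u\in Z(0,\infty)$ across the full scale of Orlicz spaces, together with the attendant well-definedness of $\Theta_\infty u$ as an element of $X$ rather than of $X_{-1}$. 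Once the norm identity is secured on a dense set, a standard density and closure argument (as in the $\mathrm{L}^{p}$ case of \cite{jpp12}) completes the proof, with no modification particular to $p=\infty$ or $Z=\mathrm{L}^{\Phi}$.
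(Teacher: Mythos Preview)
Your approach is the standard one and matches what the paper implicitly invokes: the paper gives no proof of its own, merely citing \cite{jpp12} for $Z=\mathrm{L}^{p}$, $1\le p<\infty$, and asserting that the cases $Z=\mathrm{L}^{\infty}$ and $Z=\mathrm{L}^{\Phi}$ ``follow analogously''. The coordinate computation $(\Theta_\infty u)_n=b_n\,\LL u(-\lambda_n)$ and the use of the $q$-Riesz equivalence to identify $\|\Theta_\infty u\|_X^q$ with $\|\LL u\|_{\mathrm{L}^{q}(\mu)}^q$ are exactly the intended argument.

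One correction is needed, however: your density step does not work as written for $Z=\mathrm{L}^{\infty}$, since compactly supported bounded functions are \emph{not} norm-dense in $\mathrm{L}^{\infty}(0,\infty)$, so the closing sentence ``with no modification particular to $p=\infty$'' is not quite right. The clean fix is to bypass density entirely. For every $u\in Z$ the scalar integrals $\int_0^\infty e^{\lambda_n s}u(s)\,\mathrm{d}s$ converge absolutely (by H\"older's inequality for Orlicz spaces, $e^{\lambda_n\cdot}$ lying in the complementary space; for $Z=\mathrm{L}^{\infty}$ simply because $e^{\lambda_n\cdot}\in\mathrm{L}^{1}$), so the coordinate identity and hence the norm comparison hold for all $u\in Z$ directly. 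Alternatively, truncate $u_N=u\chi_{[0,N]}$, use $\|u_N\|_Z\le\|u\|_Z$ together with $\LL u_N(-\lambda_n)\to\LL u(-\lambda_n)$, and pass to the limit via Fatou's lemma. With this adjustment your argument is complete.
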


Hence, in order to answer the above mentioned questions, we use the new embedding theorem for the Laplace--Carleson embedding, which were proved in the previous section and of independent interest.

The main results of this section are the following.

\begin{theorem}\label{mainsec3}
Let $q \ge 2$. If  $A:D(A)\subset X\rightarrow X$ generates a diagonal group with respect to a $q$-Riesz basis on $X$, 
then
 $$\mathfrak{B}_{\mathrm{L}^{\infty}}(A,\mathbb C^n)=\mathfrak{B}_{\mathrm{L}^{q/(q-1)}}(A,\mathbb C^n).$$
\end{theorem}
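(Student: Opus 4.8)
The plan is to convert the identity into a statement about Laplace--Carleson embeddings and then to lean on the embedding theory of Section \ref{sec3}. Only one inclusion needs proof: since $q/(q-1)<\infty$, the space $\mathrm{L}^\infty(0,t_0)$ embeds continuously into $\mathrm{L}^{q/(q-1)}(0,t_0)$ on each finite interval, so \eqref{eq:inclusions} already gives $\mathfrak{B}_{\mathrm{L}^{q/(q-1)}}(A,\mathbb C^n)\subseteq\mathfrak{B}_{\mathrm{L}^\infty}(A,\mathbb C^n)$, and it remains to show that every $\mathrm{L}^\infty$-admissible $B$ is $\mathrm{L}^{q/(q-1)}$-admissible. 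Because $U=\mathbb C^n$ is finite-dimensional, $B$ is determined by its $n$ columns $b^{(1)},\dots,b^{(n)}\in X_{-1}$ and $\Theta u=\sum_{j}\Theta^{(j)}u_j$; the triangle inequality together with the equivalence of norms on $\mathbb C^n$ shows that $B\in\mathfrak{B}_{Z}(A,\mathbb C^n)$ if and only if each column $b^{(j)}\in\mathfrak{B}_{Z}(A,\mathbb C)$, for any Orlicz space $Z$. This reduces everything to the scalar input case $U=\mathbb C$.

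Next I would normalise the semigroup. Since $A$ generates a diagonal group, the eigenvalues $(\lambda_n)_n$ lie in a vertical strip $a\le\Re\lambda_n\le b$. Replacing $A$ by $A-\omega$ with $\omega>b$ pushes the spectrum into $\{\Re z\le b-\omega<0\}$, so $A-\omega$ generates an exponentially stable diagonal semigroup for the same $q$-Riesz basis, while $(-\lambda_n+\omega)_n$ lies in a strip $\{c_1\le\Re z\le c_2\}\subset\CC_+$ bounded away from the imaginary axis. On finite intervals the weight $e^{-\omega t}$ is bounded above and below, so $Z$-admissibility is unaffected by this shift; moreover, for exponentially stable semigroups finite-time and infinite-time $Z$-admissibility coincide (split $(0,\infty)$ into intervals of length $t_0$ and sum the geometrically decaying contributions). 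Hence I may assume $B$ is infinite-time $\mathrm{L}^\infty$-admissible for an exponentially stable diagonal semigroup, and by Proposition \ref{propequiv} this is equivalent to boundedness of $\mathcal L\colon \mathrm{L}^\infty(0,\infty)\to\mathrm{L}^q(\CC_+,\mu)$, where $\mu=\sum_k|b_k|^q\delta_{-\lambda_k+\omega}$ is supported in the strip $\{c_1\le\Re z\le c_2\}$.

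The core of the argument is then an embedding statement for strip-supported measures. First, $\mathrm{L}^\infty$-admissibility yields a resolvent bound: testing the input-to-state map on $u(s)=e^{-\lambda s}u_0$ gives $\sup_{\Re\lambda>0}\|(\lambda-(A-\omega))^{-1}B\|<\infty$, which in the diagonal model reads $\sup_{y\in\RR}\int_{\CC_+}\big((\Re z)^2+(\Im z-y)^2\big)^{-q/2}\,d\mu(z)<\infty$. Because $\Re z\in[c_1,c_2]$ on the support of $\mu$, integrating this bounded quantity over $y$ in an interval $I$ and using that the kernel has a uniformly positive integral over such intervals forces the Carleson box condition $\mu(Q(I))\le C|I|$ for the boxes $Q(I)=\{z\in\CC_+:\Re z<|I|,\ \Im z\in I\}$. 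On the other hand, for a measure confined to a strip and for the exponent pair $(p,q)=(q/(q-1),q)$, the conjugate exponent is $p'=q$, so $q/p'=1$ and the condition $\mu(Q(I))\le C|I|$ is precisely the one characterising boundedness of $\mathcal L\colon \mathrm{L}^{q/(q-1)}(0,\infty)\to\mathrm{L}^q(\CC_+,\mu)$ proved in Section \ref{sec3}. Feeding this back through Proposition \ref{propequiv} shows that $B$ is infinite-time, hence finite-time, $\mathrm{L}^{q/(q-1)}$-admissible, completing the argument.

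The main obstacle is the sufficiency of the plain Carleson box condition for the off-diagonal embedding $\mathcal L\colon\mathrm{L}^{q/(q-1)}\to\mathrm{L}^q(\mu)$ when $q/(q-1)<q$: for a general measure a box condition is too weak in the range $p<q$, and it is exactly the confinement of $\mu$ to a strip (equivalently, the group structure) that rescues it. This is the new Laplace--Carleson embedding of Section \ref{sec3}; the surrounding steps (the column reduction, the exponential shift, and the passage from $\mathrm{L}^\infty$-admissibility to the Carleson box condition) are the routine glue around it. As a consistency check, when $q=2$ one has $q/(q-1)=2$ and the statement also follows from Theorem \ref{thm1}, since a diagonal group on a $2$-Riesz basis lives on a Hilbert space and is left-invertible.
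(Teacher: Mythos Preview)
Your argument is correct and follows the paper's route: reduce to $U=\mathbb C$, shift so that the associated measure $\mu$ is strip-supported, translate via Proposition~\ref{propequiv}, and invoke Theorem~\ref{thm:pqexp1}. One correction to your closing commentary, though: for $p=q'$ one has $p'=q$, so the embedding $\mathcal L\colon \mathrm{L}^{q'}\to \mathrm{L}^q(\mu)$ lies in the classical region~I of Figure~\ref{figure:KnownAndNewResults}, where the box condition $\mu(Q_I)\lesssim|I|$ suffices for \emph{arbitrary} measures---the strip hypothesis is not needed there but rather in your preceding step, where it is precisely what lets the resolvent bound (equivalently, testing on reproducing kernels) produce $\mathcal C_1[\mu]<\infty$ instead of only $\mathcal C_q[\mu]<\infty$.
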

Clearly the case $p=2$ in Theorem \ref{mainsec3}  is already covered by Theorem \ref{thm1}. 
\begin{proof} We first mention, that it suffices to prove the results for $n=1$ (and apply e.g.\ \cite[Prop.~4]{JacoSchwZwar19} in the general). The statement then follows directly from Proposition \ref{propequiv} and Theorem \ref{thm:pqexp1}. 
\end{proof}
As explained in the introduction, we cannot expect that $\mathfrak{B}_{\mathrm{L}^{\infty}}(A,\mathbb C^n)$ equals $\mathfrak{B}_{\mathrm{L}^{p}}(A,\mathbb C^n)$ for some $p<\infty$ in general. 

The following result, however, shows that for diagonal semigroup generators $A$, at least every element $B$ in $\mathfrak{B}_{\mathrm{L}^{\infty}}(A,\mathbb C^n)$ is contained in $\mathfrak{B}_{\mathrm{L}^{\Phi}}(A,\mathbb C^n)$ for some N-function $\Phi$ depending on $B$.

We include also a resolvent characterization of $L^\infty$-admissibility in this case.

\begin{theorem}\label{thm:OrliczfromLinfty}
Let $q \ge 2$ and $A:D(A)\subset X\rightarrow X$ be the generator of a strongly stable diagonal semigroup $(T(t))_{t\ge 0}$ with respect to a $q$-Riesz basis and eigenvalues $(\lambda_{n})_{n\in\mathbb{Z}}$. 
Let $ \alpha > (q -4)/2$, and let $\mu$ denote the measure $\mu=\sum |b_k|^{q}\delta_{-\lambda_k}$.
Then the following are equivalent:
\begin{enumerate}
\item The operator $B\in L(\CC,X_{-1})$ is infinite-time $\mathrm{L}^{\infty}$-admissible 

\item 
\begin{equation}\label{eq:condLinfty}
		\sum_{n\in\ZZ}\sup_{\substack{I\subset i\mathbb{R}\\I\textnormal{ interval}}}\frac{\mu(Q_I\cap S_n)}{|I|^q}<\infty,
	\end{equation} 
where $Q_I$ is the Carleson square and $S_n$ the dyadic strip defined in Sec.~\ref{sec21}. 

\item 
\begin{equation}  \label{WeissLinf}
	  \int_0^\infty  t^{3 + 2 \alpha -q}  \sup_{\Re z =t} \| ( z\mathrm{I}-A)^{-(2 + \alpha)} B\|^2\, \mathrm{d}t< \infty.
	\end{equation}
\end{enumerate}
In this case there exists an N-function $\Phi$ such that $B$ is infinite-time $\mathrm{L}^{\Phi}$-admissible.

Moreover, $B$ is zero-class $\mathrm{L}^{\infty}$-admissible for $(T(t))_{t\ge 0}$.
\end{theorem}
\begin{proof}
 The equivalence of the first two statements follows from Proposition \ref{propequiv} and \ref{thm:CharacterizationLpToLq}, the equivalence with (\ref{WeissLinf}) from the identity
 $$
     \| (z\mathrm{I}-A)^{-(2 + \alpha)} B\|^2 = \int_{\CC_+} \frac{1}{|w + \bar z|^{4 + 2 \alpha}} d\mu(w)
 $$
 together with Theorem \ref{thm:CharacterizationLpToLq}.
 
 \color{black}
 
 The existence of a suitable N-function is guaranteed by Theorem \ref{thm:InfinityImpliesPhi}. 
 Finally, the zero-class $\mathrm{L}^{\infty}$-admissibility follows immediately from the $\mathrm{L}^{\Phi}$-admissibility by H\"older's inequality for Orlicz spaces. 
\end{proof}

\color{black}
Since (finite-time) admissibility remains invariant under for the shifted generator $A-cI$, $c\in\mathbb{R}$, we obtain the following consequence. 
\begin{corollary}\label{cor:diag} Let $q \ge 2$ and $A:D(A)\subset X\rightarrow X$ be the generator of a diagonal semigroup with respect to a $q$-Riesz basis. 
Then for every $B\in\mathfrak{B}_{\mathrm{L}^{\infty}}(A,\mathbb C^n)$ there exists an N-function $\Phi$ such that $B\in\mathfrak{B}_{\mathrm{L}^{\Phi}}(A,\mathbb C^n)$.
\end{corollary}

Finally we can formulate a characterization for $L^{\Phi}$-admissible operators for the specific N-function $\Phi_{\exp}(t)=\exp(t)-t-1$. 
 This complements existing characterizations of $\mathrm{L}^{p}$-admissible operators for diagonal semigroups, \cite{jpp12}.
\begin{theorem}\label{thm:admPhiexp}
Let $q \ge 2$ and $A:D(A)\subset X\rightarrow X$ be the generator of a strongly stable diagonal semigroup $(T(t))_{t\ge 0}$ with respect to a $2$-Riesz basis and  eigenvalues $(\lambda_{n})_{n\in\mathbb{Z}}$. 
Then 
\begin{multline*}
\mathfrak{B}_{\mathrm{L}^{\Phi_{\exp}}}(A,\CC) \\
=\{(b_{k})_{k\in\mathbb{N}}\in{L}(\CC,X_{-1})\mid\sum_{n=1}^\infty n^2\sup_{\substack{I\subset i\mathbb{R}\\I\textnormal{ interval}}}\frac{\mu(Q_I\cap S_n)}{|I|^2} +\sup_{\substack{I\subset i\mathbb{R}\\I\textnormal{ interval}\\|I|=2}}  \mu(Q_I)<\infty\},
\end{multline*}
where $\mu,Q_I, S_n$ are defined as in Theorem \ref{thm:OrliczfromLinfty}. 
\end{theorem}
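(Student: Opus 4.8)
The plan is to reduce the admissibility statement to a single Laplace--Carleson embedding and then to evaluate the abstract embedding criterion for the concrete Young function $\Phi_{\exp}$. First I would apply Proposition~\ref{propequiv}: since a $2$-Riesz basis is assumed, the target space is $\mathrm{L}^2(\CC_+,\mu)$ with $\mu=\sum_k|b_k|^2\delta_{-\lambda_k}$, and the operator $B=(b_k)_k$ is infinite-time $\mathrm{L}^{\Phi_{\exp}}$-admissible if and only if the Laplace transform $\mathcal{L}$ maps $\mathrm{L}^{\Phi_{\exp}}(0,\infty)$ boundedly into $\mathrm{L}^2(\CC_+,\mu)$. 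Thus the theorem is equivalent to a characterisation of boundedness of this embedding, which I would extract from the general Orlicz-space embedding results of Section~\ref{sec3} (the analogue of Theorem~\ref{thm:CharacterizationLpToLq} for $Z=\mathrm{L}^{\Phi}$), specialised to $\Phi=\Phi_{\exp}$.

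The two analytic inputs driving the explicit form of the condition are the size of the complementary function and of Orlicz norms of indicators. A Legendre-transform computation gives $\Phi_{\exp}^c(s)=(s+1)\log(s+1)-s$, so that $\mathrm{L}^{\Phi_{\exp}}$ is dual to an $L\log L$-type space; this is precisely what separates its behaviour from $\mathrm{L}^2$ at small time scales. Next one computes $\|\chi_{(0,T)}\|_{\mathrm{L}^{\Phi_{\exp}}}$ from the defining relation $T\,\Phi_{\exp}(1/k)=1$: for large $T$ one has $\Phi_{\exp}(t)\sim t^2/2$, so the norm behaves like $\sqrt{T}$ as in $\mathrm{L}^2$, whereas for $T=2^{-n}$ with $n$ large the exponential growth forces $e^{1/k}\approx 2^n$, hence $\|\chi_{(0,2^{-n})}\|_{\mathrm{L}^{\Phi_{\exp}}}\approx 1/n$. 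These two regimes correspond exactly to the two parts of the stated condition: the large-time (low-frequency) regime, where the half-strip $\{0<\Re z<2\}$ is tiled by the boxes $Q_I$ with $|I|=2$ and the embedding reduces, as in the $\mathrm{L}^2$ theory, to the single-scale bound $\sup_{|I|=2}\mu(Q_I)<\infty$; and the small-time (high-frequency) regime governed by the strips $S_n$, $n\ge1$, where the weight $n^2$ appears as the reciprocal square of $\|\chi_{(0,2^{-n})}\|_{\mathrm{L}^{\Phi_{\exp}}}$.

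For necessity I would test the embedding on indicators adapted to each scale: taking $f=\chi_{(0,2^{-n})}$ one has $|\mathcal{L}f(z)|\approx 2^{-n}$ on $Q_I\cap S_n$ for intervals $I$ of length comparable to $2^n\approx|I|$, so boundedness yields $2^{-2n}\mu(Q_I\cap S_n)\lesssim\|f\|_{\mathrm{L}^{\Phi_{\exp}}}^2\approx n^{-2}$, i.e. $n^2\mu(Q_I\cap S_n)/|I|^2\lesssim1$ for each $n$; to upgrade this from a uniform bound to \emph{summability} in $n$ one tests instead on a superposition $\sum_n c_n\chi_{(0,2^{-n})}$ of indicators across scales, exploiting the convexity of $\Phi_{\exp}$ to keep the norm of the sum under control while the image accumulates the contributions of all strips. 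For sufficiency I would decompose $\CC_+$ into $\{0<\Re z<2\}$ and the strips $(S_n)_{n\ge1}$, estimate the integral of $|\mathcal{L}f|^2$ over each piece by Hölder's inequality for Orlicz spaces (pairing $f$ with $e^{-\Re(z)\,\cdot}$ and using $\Phi_{\exp}^c$), and sum the resulting strip estimates against the weights $n^2$. The main obstacle is precisely this passage between a scale-by-scale bound and the summed condition: the necessity direction requires choosing the test superposition so that the Orlicz norm grows no faster than the square root of the number of active scales, while the sufficiency direction requires a Schur-type control of the interaction between distinct strips together with careful treatment of the low-frequency region, where $\mu$ may concentrate near the imaginary axis and the $\mathrm{L}^{\Phi_{\exp}}$-norm degenerates to the $\mathrm{L}^2$-norm.
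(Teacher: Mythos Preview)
Your overall strategy---reducing via Proposition~\ref{propequiv} to a Laplace--Carleson embedding and then characterising that embedding by strip decomposition, Orlicz--H\"older for sufficiency, and weighted superpositions of scale-adapted test functions for necessity---is exactly the route the paper takes. In fact the paper's own proof of this theorem is one line: it shifts $A$ to $A-cI$ so that all eigenvalues satisfy $\Re\lambda_n<-2$, and then cites Proposition~\ref{propequiv} together with Theorem~\ref{thm:finiteTexp}; what you are sketching is essentially the proof of Theorem~\ref{thm:finiteTexp} itself, and your sketch matches that proof closely.

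There is, however, a gap in your reduction step. Proposition~\ref{propequiv} characterises \emph{infinite-time} admissibility, whereas $\mathfrak{B}_{\mathrm{L}^{\Phi_{\exp}}}(A,\CC)$ is defined via (finite-time) admissibility; strong stability alone does not force these to coincide. The paper's shift by $c$ is precisely what bridges this: $A-cI$ is exponentially stable, so admissibility and infinite-time admissibility agree for it, and $\mathfrak{B}_{\mathrm{L}^{\Phi_{\exp}}}(A,\CC)=\mathfrak{B}_{\mathrm{L}^{\Phi_{\exp}}}(A-cI,\CC)$ by shift-invariance of admissibility. A second, smaller point concerns your necessity test: unmodulated indicators $\chi_{(0,2^{-n})}$ have Laplace transforms that are not localised to any particular Carleson square within $S_n$, so they cannot extract the supremum over $I$ appearing in $\mathcal{C}_2[\mu_n]$. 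The paper uses modulated dyadic shells $f_m(t)=\chi_{(2^{-m-1},2^{-m}]}(t)e^{ic_mt}$, with $ic_m$ the midpoint of a near-extremal interval $I_m$, so that $\LL f_m$ concentrates on the right half $T_m\subset Q_{I_m}$; the superposition then carries weights proportional to $m$ (not arbitrary $c_n$), chosen so that $\|g_k\|_{\mathrm{L}^{\Phi_{\exp}}}\le 1$ while $|G_k(z)|\gtrsim n\,2^{-n}$ on $T_n$, which after squaring produces the factor $n^2$.
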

\begin{proof}
Since $\mathfrak{B}_{\mathrm{L}^{\Phi_{\exp}}}(A,\CC)=\mathfrak{B}_{\mathrm{L}^{\Phi_{\exp}}}(A-cI,\CC)$ for any $c\in\RR$, we can assume that $A$ has only eigenvalues with real part less than $-2$. The result now follows by Proposition \ref{propequiv} and Theorem \ref{thm:FiniteT}.
\end{proof}
\begin{remark}
\begin{enumerate}
\item  Theorems \ref{thm:OrliczfromLinfty} and \ref{thm:admPhiexp} can be used to formulate analogous results for finite-dimensional input spaces, i.e. $B\in L(\CC^{n},X_{-1})$ for $n\in\NN$, by considering every ``component'' of $B$ separately, see also \cite[Prop.~4]{JacoSchwZwar19}.
 \item Theorem \ref{thm:OrliczfromLinfty} generalizes \cite[Thm.~4.1]{JNPS}  where the case of analytic diagonal semigroups was considered and thus condition \eqref{eq:condLinfty} is satisfied for all $B\in L(\CC^n,X_{-1})$. Also note that in those references, $q$ may more generally be chosen from $[1,\infty)$. On the other hand note that \cite[Thm.~4.1]{JNPS} was generalized to more general analytic semigroups which are not necessarily diagonal in \cite{JacoSchwZwar19}.
 \item Corollary \ref{cor:diag} also relates to the concept of {\it input-to-state stability}. More precisely, following the results in \cite{JNPS}, it shows that for linear systems described by diagonal semigroups with respect to a $q$-Riesz basis, the notions of input-to-state stability and integral input-to-state stability are equivalent. This answers partially an open question for linear infinite-dimensional systems, see e.g.\ \cite[Open Problem 3.22]{MironchenkoPrieur}.
\end{enumerate}
\end{remark}
	
\section*{Acknowledgements}
Part of this research was carried out within the project \emph{MOTADA}, jointly funded by the University of Hamburg and Lund University. ER, SP and FS are grateful for this support. ER 
was also supported by the Knut and Alice Wallenberg foundation, KAW 2016.0442. SP also gratefully acknowledges support by VR grant 2015-05552.

The authors thank the anonymous referee for his or her very careful reading of the paper, in particular for the suggestions concerning the literature on Orlicz spaces.

\end{document}